\documentclass[12pt]{article}
\usepackage{geometry}                
\geometry{a4paper}                   
\usepackage{graphicx}
\usepackage{amssymb, mathrsfs, enumerate}
\usepackage{bbm}



\setlength{\oddsidemargin}{0.25in} \setlength{\evensidemargin}{0.25in}
\usepackage{amscd}
\usepackage{amssymb}
\usepackage{graphics}
\usepackage{amsmath}
\usepackage[english]{babel}
\usepackage{hyperref}
\usepackage[latin1]{inputenc}

\usepackage{color}      
\usepackage{srcltx}  

\usepackage{amsthm, amssymb}
\usepackage{setspace}
\usepackage[all]{xy}
\usepackage{graphicx}
\usepackage{ifpdf}
\usepackage{babel}
\usepackage{verbatim}

\newtheorem{prop}{Proposition}

\newtheorem{theorem}[prop]{Theorem}
\newtheorem{thm}[prop]{Theorem}
\newtheorem{cor}[prop]{Corollary}
\newtheorem{lemma}[prop]{Lemma}

\newtheorem{remark}[prop]{Remark}

\numberwithin{prop}{section}

\newtheorem{conj}[prop]{Conjecture}

\DeclareMathOperator{\Gal}{Gal}

\DeclareMathOperator{\h}{ht}
\DeclareMathOperator{\GL}{GL}
\DeclareMathOperator{\supp}{supp}
\DeclareMathOperator{\Hom}{Hom}
\DeclareMathOperator{\Spec}{Spec}
\DeclareMathOperator{\Cay}{Cay}

\DeclareMathOperator{\Frac}{Frac}

\DeclareMathOperator{\N}{N}

\DeclareMathOperator{\Mat}{Mat}

\DeclareMathOperator{\End}{End}

\addtolength{\hoffset}{-0.5cm}
\addtolength{\textwidth}{1cm}

\newcommand{\BC}{{\mathbb{C}}}
\newcommand{\BF}{{\mathbb{F}}}

\newcommand{\gb}{\beta}

\newcommand{\gC}{\Gamma}
\newcommand{\gc}{\gamma}
\newcommand{\gs}{\sigma}

\newcommand{\ga}{\alpha}

\newcommand{\Ad}{\text{Ad}}

\newcommand{\SL}{\text{SL}}

\newcommand{\SO}{\text{SO}}
\newcommand{\Lie}{\text{Lie}}

\def\N{{\cal N}}


\title{Mixed identities in linear groups --- effective version}

\author{Nir Avni and Tsachik Gelander}

\begin{document}

\maketitle

\begin{center}
{\it To Alex Furman for his 60th birthday}
\end{center}

\begin{abstract}
Let $\gC$ be a finitely generated linear group that does not satisfy any nontrivial (mixed) identity. We prove that there is a constant $C$ such that \begin{enumerate}
\item For every nontrivial mixed word $w(x)$ there is an element $\gamma$ of length $\| \gamma \|\leq C\log \|w\|$ such that $w(\gamma)\neq1$.
\item For every $n$ there is an element $\gamma\in \Gamma$ of length $\| \gamma \| \leq Cn$ such that $w(\gamma)\neq1$ simultaneously for all nontrivial mixed words of length $\|w\| \leq n$.
\end{enumerate} 
The proof uses a new probabilistic version of the super approximation theorem which holds for groups whose entries are not assumed to be algebraic.
\end{abstract}

{
  \hypersetup{linkcolor=blue}
}

\section{Introduction}
\subsection{Mixed identities}
Let $\gC$ be a group. A {\it mixed word} in $\Gamma$ is an expression of the form
$$
 w(x)=g_1x^{n_1}g_2x^{n_2}\cdots g_kx^{n^k},
$$
where $g_i\in \gC$ and $x$ is a free variable. We think of $w(x)$ as an element of the free product of $\Gamma$ and a cyclic group $\langle x \rangle$. We say that $\gC$ satisfies the identity $w(x)=1$ if $w(\gc)=1$ for all $\gc\in \gC$. For example, if $\alpha$ is a nontrivial central element in $\gC$ then $\gC$ satisfies the mixed identity $\alpha x \alpha^{-1}x^{-1}=1$. We say that $\gC$ is {\it mixed identity free}, or MIF, if it does not satisfy any nontrivial mixed identity, i.e., for every $w(x)\in \gC*\langle x \rangle \smallsetminus\{1\}$ there is $\gc\in \gC$ such that $w(\gc)\ne 1$. If $\Gamma$ is MIF then $\Gamma$ does not satisfy nontrivial mixed identities with several variables. Using commutators, we will show in Corollary \ref{Cor:simultaneously} that if $\gC$ is MIF then, for every finite set of words $\{w_1(x),\ldots w_m(x)\}\subset \gC*\langle x \rangle \smallsetminus\{1\}$, there is an element $\gc\in \gC$ that violates all $w_i$ simultaneously. 

We will show below that if $\Gamma$ is linear and MIF then the amenable radical of $\Gamma$ is trivial, so $\Gamma$ has a faithful representation $\rho$ for which the connected component of the Zariski closure of $\rho(\Gamma)$ is semisimple. Conversely, when $\Gamma$ is Zariski dense in a classical group, Tomanov \cite{Tomanov1,Tomanov2} gives necessary and sufficient conditions for MIF. For example:
\begin{itemize}
\item If $\overline{\Gamma}^Z \cong\SL_d$ then $\gC$ is MIF iff it has a trivial center.
\item If $\overline{\Gamma}^Z \cong\text{Sp}_{2d}$ and $\gC$ has no elements of order $2$, then $\gC$ is MIF.
\item If $\overline{\Gamma}^Z \cong\SO_d$ and $\gC$ has no nontrivial element $\gc$ for which $\gc+\gc^{-1}$ is a scalar matrix, then $\gC$ has MIF.
\end{itemize}

\subsection{Effective estimates}

Suppose that $\gC$ is generated by a finite set $X$. For an element $\gamma \in \Gamma$ we denote the word length of $\gamma$ with respect to the generating set $X$ by $\| \gamma \|_X$. Similarly, for $w(x)\in \Gamma * \langle x \rangle$, we denote the word length of $w$ with respect to the generating set $X\cup \left\{ x , x ^{-1} \right\}$ by $\|w\|_{X\cup \left\{ x, x ^{-1} \right\}}$. For a nontrivial mixed word $w(x)$ and a natural number $n$, define
\[
f_X(w):=\min \left\{ \| \gamma \|_X \mid w(\gamma)\neq 1\right\},
\]
\[
f_X(n):=\max \{ f_X(w):w\ne1, \|w\|_{X\cup\{x,x^{-1}\}}\le n\}
\]
and
$$
\phi_X(n):= \min\{ \|\gc\|_X: w(\gamma)\neq 1 \quad \forall w\neq 1\text{ such that }\|w\|_{X\cup \left\{ x , x ^{-1} \right\}} \leq n\}.
$$
The functions $f_X(n),\phi_X(n)$ depend on $X$ but their growth types do not.\\

Bounds on the growth of $\phi_X$ of $\Gamma$ have important applications for the reduced $C^*$-algebra $C^*_r(\gC)$. For example, it was recently realized in \cite{Sri1} that if $\gC$ has rapid decay and $\phi_X(n)$ grows subexponentially then $C^*_r(\gC)$ is {\it selfless} in the sense of \cite{Selfless} and, hence, has strict comparison. Strict comparison, defined in \cite{Strict}, encodes some regularity properties for K-theory and plays a significant role in the study of trace spaces.

Although the importance of strict comparison was widely understood, prior to \cite{Sri1} not a single example of a finitely generated non-amenable group $\Gamma$ such that $C^*_r(\Gamma)$ has strict comparison was known. The breakthrough of \cite{Sri1} was to associate selflessness with the subexponential growth of $\phi_X$.
We also note that the fact that $\phi_X(n)^{1/n}\to 1$ for the free group $F_X$ (proved in \cite{Sri1})
was crucial for \cite{Sri2}, which established a negative solution of the $C^*$-algebraic Tarski problem, namely that for $n\ne m$, $C^*_r(F_n)$ and $C^*_r(F_m)$ are not elementarily equivalent (i.e. can be distinguished by first-order logic). More generally, it is proved in \cite{Sri1} that acylindrically hyperbolic groups with trivial amenable radicals and rapid decay are selfless. Recently, Vigdorovich \cite{Itamar} established analog results with linear estimate (i.e., that $\phi_X(n)\le Cn$) for uniform lattices in $\SL_d(k)$ where $k$ is a local field. For $G=\SL_3(k)$ with $k$ archimedean, Lafforgue \cite{Strong(T)} proved that a uniform lattice $\gC\le G$ has rapid decay. As a consequence, $C_r^*(\gC)$ is selfless and has strict comparison.

Motivated by these developments, we study MIF for linear groups in general:

\begin{thm}\label{thm:main} 
Let $\gC\le GL(d,\BC)$ be a linear group generated by a finite set $X$. If $\gC$ is MIF then there is (a computable constant) $C=C(X)$ such that, for every $n$, \begin{enumerate}
\item $f_X(n) \leq C\log(n)$. 
\item $\phi_X(n)\le Cn$.
\end{enumerate} 
\end{thm}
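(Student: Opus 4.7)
My plan is to project $\Gamma$ onto finite groups by reducing modulo maximal ideals $\mathfrak m$ of the finitely generated subring $R\subset\BC$ generated by the entries of a generating set, produce short violators of mixed words inside the finite quotients $\Gamma/\mathfrak m\Gamma$ by combining a Lang--Weil count with expander mixing, and lift the violating word back to $\Gamma$ with the same word length. As noted in the introduction, MIF forces the amenable radical of $\Gamma$ to be trivial, so after passing to a finite-index subgroup --- which only multiplies the eventual constant $C$ --- I may assume that $\Gamma$ is Zariski dense in a connected semisimple algebraic group $G$ defined over $R$. For any nontrivial mixed word $w(x)\in\Gamma*\langle x\rangle$, the ``killer variety'' $V_w:=\{g\in G:w(g)=1\}$ is a proper Zariski-closed subvariety of $G$, and its complexity (say, the sum of degrees of defining equations) grows linearly in $\|w\|$.

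For each maximal ideal $\mathfrak m$ of $R$ with finite residue field $\BF_q$, Lang--Weil gives $|V_w(\BF_q)|/|G(\BF_q)|=O(\|w\|/q)$, so the proportion of $\gamma\in\Gamma/\mathfrak m\Gamma$ with $w(\gamma)=1$ is small once $q$ is sufficiently large compared to $\|w\|$. The probabilistic super approximation theorem announced in the abstract supplies, uniformly over a positive-density family of $\mathfrak m$, a uniform spectral gap $1-\lambda>0$ for the Cayley graph of $\Gamma/\mathfrak m\Gamma$ with respect to the image of $X$; a length-$L$ random walk is therefore within total variation $\lambda^L|G(\BF_q)|^{1/2}$ of the uniform distribution.

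Part~(1) then follows by taking $q$ polynomial in $n=\|w\|$: the killer density drops below $1/2$, and a walk of length $L=O(\log q)=O(\log n)$ is $1/4$-close to uniform in total variation, so some such walk avoids $V_w(\BF_q)$; lifting yields $\gamma\in\Gamma$ with $\|\gamma\|_X=O(\log n)$ and $w(\gamma)\ne1$. For Part~(2) a union bound is needed: there are at most $(|X|+2)^{n+1}=e^{An}$ nontrivial mixed words of length $\le n$, so I would take $q=e^{Bn}$ and $L=Cn$ with $B,C$ large, making the killer density $\le e^{-B'n}$ and the mixing error $\le e^{-C'n}$ uniformly in $w$; choosing $B,C$ so that $\min(B',C')>A$ ensures that a positive fraction of length-$Cn$ walks simultaneously avoid \emph{every} killer variety, producing a single universal violator $\gamma\in\Gamma$ of length $\le Cn$.

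The main obstacle is the probabilistic super approximation itself. Classical super approximation (Bourgain--Gamburd, Salehi-Golsefidy--Varj\'u) yields uniform spectral gaps only for Zariski-dense subgroups of $GL_d$ defined over rings of integers of number fields. Extending this to finitely generated subgroups of $GL_d(\BC)$ whose entries are allowed to be transcendental --- while still obtaining uniform spectral gaps across a large family of maximal ideals of the finitely generated coefficient ring --- is the central new technical ingredient, and the step whose execution will carry the most weight in the paper.
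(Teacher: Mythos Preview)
Your outline for Part~(1) matches the paper's proof: reduce modulo a finite quotient of the coefficient ring, bound the killer variety $V_w$ by a Lang--Weil/degree estimate, and use the spectral gap supplied by probabilistic super approximation (which you correctly identify as the main new ingredient) to produce a walk of length $O(\log p)=O(\log\|w\|)$ avoiding it. One small correction: you cannot literally ``pass to a finite-index subgroup'' of $\Gamma$, since the coefficients of $w$ live in $\Gamma$ and need not lie in $\Gamma^\circ$. The paper instead keeps $w$ as a polynomial map on $G$, runs the random walk on the preimage $\widetilde\Gamma=\pi^{-1}(\Gamma\cap G^\circ)$ in the simply connected cover, and notes that $w|_{G^\circ}$ is nonconstant because $\Gamma$ already fails the identity $w(x^{[\Gamma:\Gamma^\circ]})=1$.

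For Part~(2) the paper takes a different and more conceptual route than your union bound. It proves a purely group-theoretic reduction, valid for \emph{any} finitely generated group: sharply MIF implies linearly MIF (Proposition~\ref{prop:Sha->lin}). The trick is to fold all nontrivial words $w_1,\dots,w_k$ of length $\le n$ into a single nontrivial word $W\in\Gamma*\langle x\rangle$ via an iterated-commutator binary tree, inserting short conjugators at each stage to keep the commutators nontrivial in the free product; since $\|W\|\le(|X|+2)^{3n}$, applying Part~(1) to $W$ gives a $\gamma$ of length $\le C\log\|W\|=O(n)$ with $W(\gamma)\ne1$, and the commutator structure then forces $w_i(\gamma)\ne1$ for every $i$. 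Your direct union-bound argument with $q\sim e^{Bn}$ and walk length $Cn$ also goes through, provided you check that a \emph{single} specialization $\varphi$ can be chosen making all $e^{An}$ killer varieties simultaneously proper---this needs $p$ to exceed $C_S^{C_2 n}$ so that each defining polynomial stays a non-zero-divisor mod $p$, together with a union bound over $w$ in the choice of $\varphi$, neither of which you spell out but both of which are routine. The paper's route has the advantage of separating the combinatorics from the arithmetic and yielding a statement of independent interest beyond linear groups; your approach is more uniform in spirit but ties Part~(2) to the same arithmetic machinery as Part~(1).
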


The first claim extends results from \cite{thom}. 

Groups that satisfy the first claim of \ref{thm:main} are called {\it sharply MIF}; groups that satisfy the second claim are called {\it linearly MIF}. Both notions were studied by various authors. However, we haven't found a reference for the following observation:

\begin{prop}\label{prop:Sha->lin}
Sharply MIF implies linearly MIF.
\end{prop}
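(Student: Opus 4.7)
The plan is to combine the (at most $2^{O(n)}$) nontrivial mixed words of length at most $n$ into a single mixed word $W(x)$ of length $2^{O(n)}$ with the ``divisibility'' property that $W(\gamma)\ne 1$ forces $w(\gamma)\ne 1$ for every such $w$. Sharp MIF, being logarithmic in the word length, then yields a $\gamma$ of length $C\log \|W\|=O(n)$ that simultaneously violates every short mixed word, giving $\phi_X(n)\le O(n)$.

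Concretely, I would enumerate the nontrivial mixed words of length at most $n$ as $w_1,\ldots,w_m$ (so $m\le (|X|+2)^n=2^{O(n)}$) and place them at the leaves of a balanced binary tree of depth $\lceil\log_2 m\rceil$. To each internal node attach
\[
W_{\mathrm{node}}(x):=\bigl[\,W_{\mathrm{left}}(x),\; c_{\mathrm{node}}\,W_{\mathrm{right}}(x)\,c_{\mathrm{node}}^{-1}\,\bigr]\in\Gamma*\langle x\rangle,
\]
with $c_{\mathrm{node}}$ a conjugator of uniformly bounded length, and let $W$ be the word at the root. Since $[a,cbc^{-1}]=1$ whenever $a=1$ or $b=1$, a straightforward induction up the tree shows that $w_i(\gamma)=1$ for some $i$ forces $W(\gamma)=1$; equivalently, $W(\gamma)\ne 1$ implies $w_i(\gamma)\ne 1$ for every $i$. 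Each commutator step satisfies $\|[a,cbc^{-1}]\|\le 2(\|a\|+\|b\|)+O(1)$, so the length at most quadruples per level, giving $\|W\|\le O(m^2 n)=2^{O(n)}$. Applying sharp MIF to $W$ produces $\gamma\in\Gamma$ with $\|\gamma\|_X\le C\log \|W\|=O(n)$ and $W(\gamma)\ne 1$, which by divisibility violates every $w_i$, establishing $\phi_X(n)\le O(n)$.

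The only real obstacle is ensuring that $W$ itself is nontrivial as an element of the free product $\Gamma*\langle x\rangle$, since otherwise sharp MIF has nothing to say about it. Two elements of a free product commute iff they lie in a common cyclic subgroup, so at each internal node it suffices to arrange that $W_{\mathrm{left}}$ and $c_{\mathrm{node}}W_{\mathrm{right}}c_{\mathrm{node}}^{-1}$ do not share a common root. This is a generic condition, and the plan is to verify that a single uniformly bounded family of candidate conjugators suffices at every internal node --- for instance short powers of $x$ together with a fixed non-commuting pair of elements of $\Gamma$, which must exist because any nontrivial MIF group is non-abelian (otherwise $[\alpha,x]$ with $\alpha\ne 1$ would be a nontrivial mixed identity). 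This reduces the existence question to a finite combinatorial case check; granting nontriviality, the argument above concludes.
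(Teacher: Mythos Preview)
Your proposal is correct and is essentially the same argument as the paper's: both build a balanced binary tree of commutators over the $\le (|X|+2)^n$ short mixed words, using conjugators from a fixed finite set to keep each node nontrivial, obtain a single word of length $2^{O(n)}$, and then apply sharp MIF. The one step you leave as a ``finite combinatorial case check'' is exactly what the paper supplies via a short lemma: conjugating \emph{both} children by elements of $\{1,x,x^{-1},\sigma_1,\sigma_2\}$ (with $\sigma_1,\sigma_2$ distinct nontrivial elements of $\Gamma$) one can always arrange that the reduced form of one commutand begins and ends with a power of $x$ while the other begins and ends in $\Gamma$, so they cannot commute in $\Gamma*\langle x\rangle$; note that two-sided conjugation by a finite set $S$ is, up to an outer conjugation, one-sided conjugation by $S^{-1}S$, so your one-sided formulation works with the enlarged (still bounded) conjugator family.
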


Our proof of Theorem \ref{thm:main} is probabilistic, yet effective. We show that if $(\gamma_k)$ is the random walk on $\Gamma$ then, for a mixed word $w(x)$, the probability that $w(\gamma_k)=1$ decays exponentially in $k$, where the rate depends only on $X$. In an outstanding ongoing work \cite{BeBr}, Becker and Breuillard prove a stronger result about escaping general subvarieties with uniform estimates with respect to generating sets. 

For general groups, it will be interesting to know if random walk is always the reason behind sharp MIF. The following was conjectured by Assaf Naor.

\begin{conj} \label{conj:assaf}
Let $\gC=\langle X\rangle$ be a finitely generated and sharply MIF group. Then there are constants $a<1$ and $b$, depending only on $X$, such that, for every $w(x)\in \Gamma * \langle x \rangle$,
$$
\Pr(w(\gamma_k)=1)\le b \|w\|_{X \cup \left\{ x , x ^{-1} \right\}}a^k,
$$
where $\gamma_k$ is the $k^{th}$ step of the random walk generated by $X$.
\end{conj}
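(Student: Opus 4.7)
My approach would be a block-decomposition argument using sharp MIF as the per-block input. Fix a nontrivial mixed word $w$ of length $n$ and a block length $L=L(n)$ to be chosen. Write the random walk as $\gamma_k = Y_1 Y_2 \cdots Y_M$ with $M = \lfloor k/L \rfloor$ and independent blocks $Y_j$, each a product of $L$ uniform generators from $X \cup X^{-1}$. The plan is to show that, conditional on all other blocks, the single block $Y_j$ has probability at least $\gep>0$ (independent of $k$) of forcing $w(\gamma_k) \ne 1$. Iterating over $M$ blocks then yields decay at rate $(1-\gep)^{k/L}$; choosing $L = O(\log n)$ converts this to an $a^k$ bound up to a polynomial-in-$n$ prefactor, which should be absorbed into the $b\|w\|$ term.

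To implement the per-block step, condition on the prefix $h = Y_1 \cdots Y_{j-1}$ and the suffix $h' = Y_{j+1} \cdots Y_M$, and introduce the twisted mixed word
\[
w_{h,h'}(x) := w(h \cdot x \cdot h').
\]
Since $w$ is not a mixed identity on $\gC$, neither is $w_{h,h'}$ (substituting $y=hxh'$ recovers $w$). Sharp MIF applied to $w_{h,h'}$ then furnishes a witness $\gamma^* \in \gC$ with $w_{h,h'}(\gamma^*)\ne 1$ and $\|\gamma^*\|\le C\log \|w_{h,h'}\|$. If $L$ exceeds this length, then $\Pr(Y_j=\gamma^*)\ge (2|X|)^{-L}$, and on that event $w(\gamma_k)\ne 1$. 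Combining these conditional estimates via independence of the blocks produces the geometric decay.

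The principal obstacle is the scaling $\|w_{h,h'}\| = O(\|w\|(\|h\|+\|h'\|+1))$, which can be as large as $O(nk)$; this forces $L=\Omega(\log(nk))$ and destroys the per-step exponential rate. Overcoming it is, I expect, the heart of the conjecture. Plausible remedies include: (a) replacing $w_{h,h'}$ by a shorter representative, for example by using conjugation-invariance of the vanishing condition to absorb $h'$ into the outer constants $g_i$ of $w$; (b) restricting to a high-probability event on which $\|h\|+\|h'\|$ is subpolynomial in $k$, so that the prefactor remains manageable; (c) replacing the pointwise sharp MIF input by a twist-invariant probabilistic substitute --- the non-linear analogue of the probabilistic super approximation driving the linear case of Theorem \ref{thm:main}. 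Each of these requires input beyond the literal sharp MIF hypothesis, which is presumably why the conjecture remains open.
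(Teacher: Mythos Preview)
This statement is labeled a conjecture in the paper (attributed to Assaf Naor) and is not proved there; there is no paper argument to compare against. Your write-up is consistent with this, since you close by noting that the approach is incomplete and that the conjecture remains open.

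That said, your sketch has a gap that precedes the length-scaling obstacle you flag. The iteration ``each block $Y_j$, conditional on the others, has probability at least $\gep$ of forcing $w(\gamma_k)\ne1$, hence $\Pr(w(\gamma_k)=1)\le(1-\gep)^{M}$'' is not valid: once you condition on all other blocks, the event ``$Y_j$ saves us'' is literally the event $\{w(\gamma_k)\ne1\}$, which is the same event for every $j$, so there is nothing independent to multiply. A clean illustration (in a non-MIF group, so not a counterexample to the conjecture itself, only to your iteration): take $\Gamma=\mathbb{Z}$, $w(x)=x$, block length $L=1$. For every $j$ and every value $s$ of the remaining sum, at least one choice of $Y_j\in\{\pm1\}$ makes the total nonzero, so the per-block conditional bound holds with $\gep=1/2$; yet $\Pr(\gamma_k=0)\sim c/\sqrt{k}$, not $2^{-k}$. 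The per-block statement is a one-shot anticoncentration bound, and turning it into exponential decay requires a genuine decoupling mechanism (spectral gap, non-amenability input, or similar) that the block picture by itself does not supply.

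Your diagnosis of the second obstacle---that $\|w_{h,h'}\|$ grows like $nk$, forcing $L=\Omega(\log(nk))$---is accurate, and remedies (a)--(c) are reasonable directions. But even granting a twist-invariant sharp-MIF input with no length blow-up, the missing decoupling step would remain. In the linear case the paper sidesteps both issues at once by reducing modulo a finite quotient in which the walk equidistributes at an exponential rate; finding an abstract substitute for that is exactly what Conjecture~\ref{conj:assaf} is asking for.
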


\subsection{Probabilistic super approximation}


To prove Theorem \ref{thm:main}, we estimate the probability that $w(\gamma_k)=1$ by reducing modulo an appropriately chosen prime and applying a variant of the super approximation theorem. Recall the super approximation theorem for groups with rational entries:

\begin{thm}\label{thm:CSA}
Let $X \subseteq \GL_d(\mathbb{Q})$ be a finite set, let $\Gamma := \langle X \rangle$ be the group generated by $X$, and let $G:= \overline{\Gamma}$. Assume that $G$ is connected, simply connected, and semisimple. There is a natural number $p_0$ such that if $p>p_0$ is a prime number then the reduction mod $p$ map $\varphi :\gC\to G(\BF_p)$ is surjective and the Cayley graphs $\Cay(G(\mathbb{F}_p),\varphi(X))$ form a family of expanders.
\end{thm}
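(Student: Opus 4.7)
I would separate the two conclusions and treat them by different means. The surjectivity of $\varphi$ for large $p$ is a direct consequence of strong approximation: since $G$ is connected, simply connected, and semisimple, and $\Gamma$ is finitely generated and Zariski dense in $G$, the strong approximation theorem of Matthews--Vaserstein--Weisfeiler (refined by Nori and by Weisfeiler) asserts that the closure of $\Gamma$ in the restricted product $\prod_{p}G(\mathbb{Z}_p)$ (over primes of good reduction for a chosen integral model) is open. Consequently, for all but finitely many primes $p$ one has $\varphi(\Gamma)=G(\mathbb{F}_p)$, and the finitely many exceptional primes can be absorbed into $p_0$.

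For the expansion assertion, the plan is to run the Bourgain--Gamburd machine in the form extended to semisimple groups of bounded rank by Breuillard--Green--Tao, Pyber--Szab{\'o}, and Salehi Golsefidy--Varj{\'u}. The machine reduces the uniform spectral gap to three ingredients, all with constants independent of $p$: \emph{(i) quasi-randomness}, i.e.\ every nontrivial complex representation of $G(\mathbb{F}_p)$ has dimension at least $p^{c}$ for some $c=c(G)>0$, which follows from the Landazuri--Seitz bound; \emph{(ii) a product theorem}, i.e.\ every symmetric generating subset $A\subseteq G(\mathbb{F}_p)$ with $|A|\le|G(\mathbb{F}_p)|^{1-\delta}$ satisfies $|A\cdot A\cdot A|\ge|A|^{1+\epsilon}$, due to Helfgott in low rank and to Pyber--Szab{\'o} and Breuillard--Green--Tao in general bounded rank; and \emph{(iii) a non-concentration estimate}, i.e.\ for some $\eta>0$ the $k$-step random walk driven by $\varphi(X)$ lies in the mod-$p$ reduction of any proper algebraic subgroup of $G$ with probability at most $(1-\eta)^{k}$. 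Given these, the standard $L^{2}$-flattening argument (Plancherel on $G(\mathbb{F}_p)$ combined with iterated applications of the product theorem) forces the $k$-th convolution of the uniform measure on $\varphi(X)$ to become close to Haar on $G(\mathbb{F}_p)$ after $k=O(\log p)$ steps, which is equivalent to the asserted expansion of the Cayley graphs.

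The main obstacle is the non-concentration estimate (iii) with a rate $\eta$ uniform in $p$ and in the choice of proper subgroup. My approach is to first establish an escape-from-subvarieties statement in characteristic zero, exploiting the Zariski density of $\Gamma$ in $G$ to show exponential escape of the random walk in $\Gamma$ from any fixed proper subvariety of $G$. Uniformity is then obtained by observing that the proper algebraic subgroups of $G$ are cut out by equations whose degrees are bounded in terms of $\dim G$ alone and are defined over a fixed number field, so the characteristic-zero escape descends mod $p$ with uniform constants for all but finitely many $p$, after which the remaining primes are again absorbed into $p_0$. This uniformisation across the infinite family of algebraic subgroups and across primes is the most delicate point, and is precisely what the Salehi Golsefidy--Varj{\'u} framework is designed to supply.
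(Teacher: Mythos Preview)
The paper does not supply its own proof of this statement: Theorem~\ref{thm:CSA} is quoted as a known result, and the reader is referred to \cite{GV} (Salehi Golsefidy--Varj\'u) and to the survey \cite{Br}. Your outline is essentially the strategy those references carry out---strong approximation (Weisfeiler/Nori) for the surjectivity, and the Bourgain--Gamburd machine for the spectral gap, with quasi-randomness from \cite{LaSe74}, the product theorem from \cite{BGT11} or \cite{PySa16}, and the non-concentration input from the Salehi Golsefidy--Varj\'u framework. In that sense your plan is correct and matches what the paper invokes rather than proves.

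One imprecision is worth flagging. In (iii) you write that the relevant proper algebraic subgroups are ``defined over a fixed number field'' and that the characteristic-zero escape ``descends mod $p$.'' But the subgroups one must avoid are \emph{a priori} arbitrary proper subgroups of the finite group $G(\mathbb{F}_p)$, not reductions of a fixed family defined over $\mathbb{Q}$ or over any one number field. The actual argument passes through a Nori/Larsen--Pink type structure theorem: every proper subgroup of $G(\mathbb{F}_p)$ is contained in the $\mathbb{F}_p$-points of a proper algebraic subgroup of $G$ over $\mathbb{F}_p$ of bounded complexity (degree bounded independently of $p$). It is this uniform bound on complexity---not a common characteristic-zero field of definition---that lets the escape estimate be made uniform in the subgroup and in $p$. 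Your final sentence acknowledges that this uniformisation is the delicate point and defers to \cite{GV}, which is fair, but the heuristic you give for why it should work is not quite the right one.
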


We refer to \cite{GV} for a more general version (as well as to Theorem 1.2 in the excellent overview \cite{Br}). 
The assumption that the entries are rational (or at least algebraic) may appear crucial since the repulsive phenomenon between (algebraic) integers plays an important role in the spectral gap arguments. For linear groups with rational (or algebraic) entries, e.g. $S$-arithmetic groups or their thin subgroups, we could rely on Theorem \ref{thm:CSA} in our proof of Theorem \ref{thm:main}. 
For groups with nonalgebraic entries, one may obtain variants of Theorem \ref{thm:CSA} by considering small deformations with algebraic entries. However, these deformations are non-faithful, and the price (measured in heights of integers) of making them faithful on large balls is too expensive for our application. We instead prove and use a probabilistic variant of the super approximation theorem that works for matrices with arbitrary entries.  

For the statement, note that if $R$ is a finitely generated ring and $p$ is a prime then the set $\Hom(R,\mathbb{F}_p)$ is finite. For an $R$-scheme $\mathbb{X}$ and $\varphi\in \Hom(R,\mathbb{F}_p)$, denote the fiber product $\mathbb{X} \times_{R} \Spec(\mathbb{F}_p)$ by $\mathbb{X} ^{\varphi}$ and the induced map $\mathbb{X}(R) \rightarrow \mathbb{X}^ \varphi(\mathbb{F}_p)$ by $\varphi$.

\begin{theorem} \label{thm:prob.SSA} Let $R \subseteq \mathbb{C}$ be a finitely generated ring, let $X \subseteq \GL_d(R)$ be a finite set, and let $\Gamma = \langle X \rangle$. Assume that the Zariski closure of $\Gamma$ is connected, simply connected, and semisimple. Then there is an element $r\in R$, a group scheme $\mathbb{G} \subseteq \GL_d$ defined over $R$, and a finite Galois extension $F/ \mathbb{Q}$ such that \begin{enumerate}
\item \label{item:prob.SSA.R} $X \subseteq \mathbb{G}(R)$.
\item \label{item:prob.SSA.SA} For every prime number $p$ and every homomorphism $\varphi \in \Hom(R[r ^{-1}],\mathbb{F}_p)$, the algebraic group $\mathbb{G} ^{\varphi}$ is semisimple and $\varphi(\Gamma)=\mathbb{G} ^{\varphi}(\mathbb{F}_p)$.
\item \label{item:prob.SSA.Super} For every $\eta >0$ there are $p_0,\epsilon >0$ such that if $p>p_0$ is a prime number that splits in $F$ and $\varphi$ is a uniformly distributed element of $\Hom(R[r ^{-1}],\mathbb{F}_p)$, then 
\[
\Pr \left( \text{$\Cay \left( \mathbb{G} ^ \varphi(\mathbb{F}_p), \varphi(X) \right)$ is an $\epsilon$-expander} \right) > 1-\frac{1}{p^{1-\eta}}.
\]
\end{enumerate} 
\end{theorem}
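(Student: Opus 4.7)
The plan is to produce $r$, $\mathbb{G}$, and $F$ in three stages matching the three items. For item~\ref{item:prob.SSA.R}, I spread out the Zariski closure. Let $G = \overline{\Gamma}^{Z} \subseteq \GL_{d,\mathbb{C}}$, which by hypothesis is connected, simply connected, and semisimple. Since $G$ is cut out by finitely many polynomials with finitely many coefficients, by inverting a suitable $r_{1}\in R$ the defining ideal lies already in $R[r_{1}^{-1}][T_{ij},\det^{-1}]$; this gives a group scheme $\mathbb{G}\subseteq\GL_{d}$ over $R[r_{1}^{-1}]$ with $X\subseteq\mathbb{G}(R[r_{1}^{-1}])$. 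Constructibility of the locus where a family is smooth with semisimple, simply connected, geometrically connected fibers of a given type (Chevalley group schemes, SGA~3) then lets me invert a further element so that every fiber of $\mathbb{G}\to\Spec R[r^{-1}]$ is of this form.

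For item~\ref{item:prob.SSA.SA}, I would combine uniform Zariski density with Nori's theorem. A finite subset $Y\subseteq\Gamma$ already Zariski-generates $G_{\mathbb{C}}$; by inverting another element of $R$ if needed, $\varphi(Y)$ remains Zariski dense in $\mathbb{G}^{\varphi}$ for every $\varphi$. Nori's strong approximation theorem then gives $\varphi(\Gamma)\supseteq\mathbb{G}^{\varphi}(\mathbb{F}_{p})^{+}$, which equals $\mathbb{G}^{\varphi}(\mathbb{F}_{p})$ because the fibers are simply connected semisimple, as soon as $p$ exceeds a bound $p_{0}$ depending only on the Lie type. Absorbing the integer $\prod_{p<p_{0}}p$ into $r$ makes $\Hom(R[r^{-1}],\mathbb{F}_{p})$ empty for $p<p_{0}$ and the claim vacuous there. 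I would take $F$ to be the splitting field of the absolute root datum of $\mathbb{G}$, so that for $p$ split in $F$ every $\mathbb{G}^{\varphi}$ is an $\mathbb{F}_{p}$-split Chevalley group whose standard tori, parabolics, and Levis are defined over $\mathbb{F}_{p}$ uniformly in $\varphi$---this is what makes the last item accessible.

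For item~\ref{item:prob.SSA.Super} I would run a Bourgain--Gamburd machine uniformly in $\varphi$. The two structural inputs---quasi-randomness of $\mathbb{G}^{\varphi}(\mathbb{F}_{p})$ via the Landazuri--Seitz bound, and the Breuillard--Green--Tao and Pyber--Szab\'{o} product theorem for approximate subgroups of finite simple groups of Lie type---are uniform in $\varphi$ and depend only on the Lie type, so they come for free once the first two items are in place. The real work is the \emph{non-concentration estimate}: at scale $k\asymp\log p$, the random walk on $\varphi(\Gamma)$ puts mass at most $p^{-\delta}$ on every proper algebraic subgroup of $\mathbb{G}^{\varphi}(\mathbb{F}_{p})$. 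My strategy is to transfer this non-concentration to the base $\Spec R[r^{-1}]$: for each proper subgroup scheme $H$ in the uniformly bounded list parametrising proper subgroups of split Chevalley groups, the event ``too many words in $X$ of length $\le k$ are sent into $H$ by $\varphi$'' is cut out on the base by the vanishing of polynomials built from the structure constants of $\mathbb{G}$ and the word map. Zariski density of $\Gamma$ in $\mathbb{G}$ keeps those polynomials from being identically zero, and Lang--Weil bounds on $\Spec R[r^{-1}]_{\mathbb{F}_{p}}$ then bound each bad event by $Cp^{-1}$; summing over the polynomially many $H$'s, coupled inductively with the product theorem, yields the desired probability bound $1-p^{-(1-\eta)}$.

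The main obstacle is exactly this non-concentration estimate. In the classical $S$-arithmetic case of Salehi-Golsefidy--Varj\'{u} one rules out concentration on a proper subgroup by noting that a nonzero matrix entry of bounded height cannot vanish mod $p$ too often; for transcendental entries ``bounded height'' is meaningless, and the proof must genuinely exploit the randomness of $\varphi$ in its place. Doing so requires (i) a scheme-theoretic enumeration of the candidate subgroups $H\le\mathbb{G}^{\varphi}$ with complexity independent of $p$ and $\varphi$, which is where the Chevalley structure over $\mathcal{O}_{F}$ and the splitting of $p$ in $F$ are essential; (ii) a quantitative transfer showing that concentration on a given $H$ traps $\varphi$ inside a proper subvariety of $\Spec R[r^{-1}]$ of degree bounded independently of $p$, so that Lang--Weil gives the saving; and (iii) an inductive coupling with the product theorem that prevents exponential blow-up when summing over $H$. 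The interaction between (ii) and (iii) is the technical heart of the argument.
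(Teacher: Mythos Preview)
Your overall architecture matches the paper's, and you correctly flag non-concentration as the heart of the matter. But your mechanism for non-concentration is quite different from the paper's, and your step~(ii) is exactly where the argument is incomplete.

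The paper never enumerates proper subgroups of $\mathbb{G}^\varphi(\mathbb{F}_p)$. Instead it constructs a single subvariety $\mathbb{V}\subseteq\mathbb{G}\times_{R}\mathbb{G}$ whose complement parametrizes pairs that generate both a Zariski-dense subgroup over $\mathbb{C}$ and all of $\mathbb{G}^\varphi(\mathbb{F}_p)$ over $\mathbb{F}_p$ (this is the paper's Lemma on generating pairs, proved via Jordan's bound and a Nori-type argument). The randomness of $\varphi$ is then spent only on two element-by-element events: that $\varphi$ is injective on the ball $X^m$ with $m=\alpha\log p$, and that $\varphi$ preserves non-membership in $\mathbb{V}$ on $X^{mC_{EMO}}$. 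Each event involves polynomially-in-$p$ many ring elements of height $O(\log p)$, and a height machine on $R$ together with Lang--Weil on $\Spec R\otimes\mathbb{F}_p$ yields probability $>1-p^{-(1-\eta)}$. Conditioned on these two events, \emph{every} proper $\Delta\subsetneq\mathbb{G}^\varphi(\mathbb{F}_p)$ is handled at once: the set $Y:=X^m\cap\varphi^{-1}(\Delta)$ cannot be Zariski dense in $G(\mathbb{C})$, since otherwise the EMO escape lemma would produce a pair in $Y^{C_{EMO}}\times Y^{C_{EMO}}$ outside $\mathbb{V}$, hence a generating pair landing inside $\Delta$. Thus $Y$ lies in a proper subvariety $H\subsetneq G$, and the needed bound $\mu_X^{*m}(H)\le e^{-cm}$ is a purely \emph{characteristic-zero} statement, which the paper proves by specializing $R$ to a number field and invoking the classical super approximation theorem over $\mathbb{Q}$ as a black box. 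Your assertion~(ii)---that concentration on a fixed $H$ traps $\varphi$ in a proper subvariety of $\Spec R[r^{-1}]$ of $p$-independent degree---is precisely the step this argument avoids; you give no indication of how to convert the statistical condition $\mu_{\varphi(X)}^{*m}(H)>p^{-\delta}$ into a bounded-degree polynomial condition on $\varphi$, and it is not clear this can be done directly. Note also that the paper's $F$ is the algebraic closure of $\mathbb{Q}$ in $\Frac(R)$, chosen so that Lang--Weil on $\Spec R\otimes\mathbb{F}_p$ has the correct main term; it is not the splitting field of the root datum, and $\mathbb{G}^\varphi$ is not assumed split.
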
 

\begin{remark} Claims \ref{item:prob.SSA.R} and \ref{item:prob.SSA.SA} are special cases of the strong approximation theorem of \cite{Wei84}.
\end{remark} 

\paragraph{Acknowledgements} Thanks to Itamar Vigdorovich for discussions about MIF and for a careful reading of the first draft, to Emmanuel Breuillard, Sri Kunnawalkam Elayavalli, and Maks Radziwill for their help navigating the literature, to Amos Nevo for a stimulating discussion, and to Assaf Naor for suggesting Conjecture \ref{conj:assaf}.

\section{Super approximation}

\subsection{Heights}

\begin{theorem} \label{thm:ht.exists} Let $N,t$ be natural numbers, let $R$ be a domain which is a finite extension of the ring $\mathbb{Z} \left[ \frac1N \right][x_1,\ldots,x_t]$, and let $F$ be a number field. There is a function $\h_R: R \rightarrow \mathbb{R}_{\geq 0}$ and a constant $C_R$ such that, \begin{enumerate}
\item For every $r_1,\ldots,r_m\in R$, \begin{enumerate}
\item \label{item:ht.+} $\h_R(r_1+\ldots+r_m) \leq \log_N(m)+\max \left\{ \h_R(r_i) \right\}$.
\item \label{item:ht.*} $\h_R(r_1 \cdots r_m) \leq m \cdot C_R + (t+1)\sum \h_R(r_i)$.
\end{enumerate} 
\item \label{item:ht.mod.p} For every $r\in R \smallsetminus \left\{ 0\right\}$, if $p>C_R^{\h_R(r)}$ is a prime number then the image of $r$ in $R \otimes \mathbb{F}_p$ is not a zero divisor.
\item \label{item:ht.mod.p.2} For every $r\in R \smallsetminus \left\{ 0 \right\}$ and every $n \geq N,\h_R(r)$ there is a prime number $n \leq p\leq C_Rn$ such $p$ splits in $F$ and the image of $r$ in $R \otimes \mathbb{F}_p$ is not a zero divisor.
\end{enumerate} 
\end{theorem}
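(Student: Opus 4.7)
The plan is to build $\h_R$ in three stages. First, I present $S := \BZ[\frac{1}{N}][x_1,\ldots,x_t]$ as the quotient $\BZ[y,x_1,\ldots,x_t]/(Ny-1)$, viewing $S$-elements as equivalence classes of polynomials in $t+1$ variables over $\BZ$, and define $\h_S(f) := \inf_{\tilde f}\, \max(\deg \tilde f,\ \log_N \max_\alpha |a_\alpha(\tilde f)|)$, the infimum over polynomial lifts. Sub-additivity (a) follows at once: given lifts $\tilde f_i$ approximately achieving $\h_S(f_i)$, the lift $\sum \tilde f_i$ of $\sum f_i$ has $\deg \le \max_i \deg \tilde f_i$ and $\max_\alpha |a_\alpha| \le m \cdot \max_i \max_\alpha |a_\alpha(\tilde f_i)|$, whence taking $\log_N$ yields the $\log_N m$ correction. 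For (b), the standard bound on coefficients of a product of polynomials in $t+1$ variables yields $\h_S(\prod f_i) \le (t+1)\sum \h_S(f_i) + m \cdot C_R$, with the polynomial $\log$-term absorbed by the additive $m\cdot C_R$ slack.

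Second, I extend $\h_S$ to $R$. Since $R$ is a finite $S$-module, I fix a presentation $R = S[z_1,\ldots,z_s]/I$ in which each $z_j$ satisfies a monic integrality equation of degree $e_j$ over $S$. Every $r \in R$ then has a representative $\tilde r \in S[z_1,\ldots,z_s]$ with $z_j$-degree less than $e_j$; I set $\h_R(r) := \inf_{\tilde r} \h_S(\tilde r)$, viewing $\tilde r$ as a polynomial in all $t+1+s$ variables. Properties (a) and (b) transfer from $\h_S$, with the reduction of products modulo the fixed monic integrality relations (needed after each multiplication to bound $z_j$-degrees) contributing only an additive constant per step, which aggregates into the $m \cdot C_R$ summand.

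Third, for property (ii), I use the norm $\mathcal N := \mathrm{Nm}_{L/K}: L \to K$ with $K = \Frac(S)$, $L = \Frac(R)$. Since $R$ is a domain, $\mathcal N(r) \ne 0$ for nonzero $r$; since $S$ is a UFD (hence integrally closed) and $r$ is integral over $S$, $\mathcal N(r) \in S$. Writing $\mathcal N(r)$ as the product of $[L:K]$ Galois conjugates of $r$ and applying (b) yields $\h_S(\mathcal N(r)) \le C'_R \cdot \h_R(r)$. By generic freeness there is a nonzero $a_0 \in S$ such that $R[a_0^{-1}]$ is free over $S[a_0^{-1}]$ of rank $[L:K]$; on this free module multiplication by $r$ has determinant $\mathcal N(r)$. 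For any prime $p$ exceeding (exponentially in $\h_R(r)$) the contents of $\mathcal N(r)$ and $a_0$, $r \bmod p$ is a non-zero-divisor on $R[a_0^{-1}] \otimes \BF_p$, and the localization $R \otimes \BF_p \hookrightarrow R[a_0^{-1}] \otimes \BF_p$ --- injective since $a_0 \bmod p$ is itself a non-zero-divisor on $R \otimes \BF_p$ by the analogous argument applied to $a_0 \in S$ --- transports the property back to $R \otimes \BF_p$.

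Property (iii) follows by combining (ii) with effective Chebotarev. The interval $[n, C_R n]$ contains $\sim (C_R-1) n/([F:\QQ]\log n)$ primes splitting in $F/\QQ$, while the ``bad'' primes ruled out by (ii) number $O(\h_R(r) \log N / \log(\h_R(r) \log N))$ by the standard bound on distinct prime factors of an integer; for $n \ge \h_R(r)$ and $C_R$ sufficiently large (depending on $N, F$), good primes outnumber bad. The main obstacle I anticipate is the last step of (ii): converting the algebraic statement ``$\mathcal N(r) \bmod p \ne 0$'' to the zero-divisor statement about $r$ in $R \otimes \BF_p$ requires carefully absorbing the auxiliary element $a_0$ produced by generic freeness into the universal constant $C_R$ (or into $N$), so that the final threshold $p > C_R^{\h_R(r)}$ depends only on $\h_R(r)$ and not implicitly on $r$ through $a_0$.
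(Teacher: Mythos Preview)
Your approach is essentially the paper's: both build $\h$ in stages (base ring, polynomial ring, general finite extension), both use the field norm $\mathcal{N}(r)$ together with a bound $\h_S(\mathcal{N}(r)) \le C' \h_R(r)$ for claim (ii), and both invoke Chebotarev for claim (iii). The paper writes $\h$ on $\mathbb{Z}[1/N]$ directly rather than via an extra variable $y$, and expresses elements of $R$ via a fixed $S$-module generating set $\alpha_1,\ldots,\alpha_s$ with structure constants $c_{ij}^k$ rather than via monic integrality relations, but these are cosmetic differences.

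Two points need more care. First, your bound on $\h_S(\mathcal{N}(r))$ via ``product of $[L:K]$ Galois conjugates and (b)'' is informal: the conjugates of $r$ live in the Galois closure of $L/K$, not in $R$, so property (b) does not literally apply to them. The clean argument (which the paper uses) is that $\mathcal{N}\big(\sum a_i \alpha_i\big)$ is a \emph{fixed} homogeneous polynomial of degree $[L:K]$ in the $a_i$ with coefficients in $S$; properties (a) and (b) applied to this fixed polynomial give the linear bound.

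Second, and more substantively, your justification that $a_0$ is a non-zero-divisor on $R \otimes \mathbb{F}_p$ ``by the analogous argument applied to $a_0 \in S$'' does not work: the base case only tells you $a_0 \bmod p \ne 0$ in the \emph{domain} $S \otimes \mathbb{F}_p$, which says nothing about zero-divisors in the possibly non-reduced $S\otimes\mathbb{F}_p$-module $R \otimes \mathbb{F}_p$. This is the real content of the obstacle you flag at the end, not merely bookkeeping the constant. The fix is short but different from what you wrote: $a_0$ is a zero-divisor on $R/pR$ iff $R/a_0R$ has $p$-torsion, iff $p$ lies in $\mathfrak{q} \cap \mathbb{Z}$ for some $\mathfrak{q} \in \mathrm{Ass}_R(R/a_0R)$. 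Since $R$ is Noetherian, $R/a_0 R$ has only finitely many associated primes, and each meets $\mathbb{Z}$ in at most one rational prime; hence only finitely many $p$ are bad and these are absorbed into $C_R$. With this lemma in hand, your generic-freeness route to (ii) is a perfectly good alternative to the paper's decomposition of $R\otimes\mathbb{F}_p$ into domains.
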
 

\subsubsection{Proof of Theorem \ref{thm:ht.exists} for $R=\mathbb{Z}\left[ \frac1N \right]$}

Define $C_R=N^{\log_2 N}[F: \mathbb{Q}]$ and $\h_{\mathbb{Z}[1/N]}: \mathbb{Z}[1/N] \rightarrow \mathbb{R}_{\geq 0}$ as
\[
\h_{\mathbb{Z}[1/N]} \left( \frac{a}{N^k} \right) = \max \left\{ k , \log_N^+\left| \frac{a}{N^k}\right| \right\}, 
\]
if $a$ is not divisible by $N$.

For every $r_1,\ldots,r_m\in \mathbb{Z}[1/N]$,
\begin{equation} \label{eq:ht.Z.+}
\h_{\mathbb{Z}[1/N]}(q_1+\ldots+q_m) \leq \log_N(m)+\max \left\{ \h_{\mathbb{Z}[1/N]}(q_i) \right\}
\end{equation}
and
\begin{equation} \label{eq:ht.Z.*}
\h_{\mathbb{Z}[1/N]}(q_1 \cdot q_2) \leq \h_{\mathbb{Z}[1/N]}(q_1)+\h_{\mathbb{Z}[1/N]}(q_2)
\end{equation}
proving Claims \eqref{item:ht.+} and \eqref{item:ht.*}. 

Let $r\in R \smallsetminus \left\{ 0,1 \right\}$ and write $r=\frac{a}{N^k}$ with $a$ not divisible by $N$. Assume that $p>C_R^{\h_{\mathbb{Z}[1/N]}(r)}$. Then $p>N$ (since $\h_{\mathbb{Z}[1/N]}(r) \geq \log_N2=\frac{1}{\log_2 N}$) and $\mathbb{Z}[1/N] \otimes \mathbb{F}_p=\mathbb{F}_p$. Since $\h_{\mathbb{Z}[1/N]}(r) \geq k,\log_N|a|-k$, we get $\log_N|a|\leq 2\h_{\mathbb{Z}[1/N]}(r)$, so $|a|<p$ and the reduction of $r$ mod $p$ is nonzero, proving Claim \eqref{item:ht.mod.p}.

By Chebotarev's density theorem (see, e.g. \cite[pg. 143]{IwKo04})
$$
 n\#_F:=\prod_{p\le n,~\text{splits in}~F}p=
 e^{(|F/\mathbb{Q}|^{-1}+o(1))n},
$$
so
\begin{equation} \label{eq:primorial}
\prod_{p\in [n,C_Rn]\text{ splits in }F}p=e^{\left( \frac{C_R-1}{[F: \mathbb{Q}]}+o(1)\right) n}.
\end{equation}
Since $n \geq \h_R(r)$, the right hand side of \eqref{eq:primorial} is greater than $N^{\h_{\mathbb{Z}[1/N]}(r)}$ which is greater than$|a|$, so there is a prime $n<p<C_Rn$ that splits in $F$ and does not divide $a$. The reduction of $r$ mod $p$ is nonzero, proving Claim \eqref{item:ht.mod.p.2}.

\subsubsection{Proof of Theorem \ref{thm:ht.exists} for $R=\mathbb{Z}\left[ \frac1N \right][x_1,\ldots,x_t]$}

For $I\in \mathbb{Z}_{\geq 0} ^t$, denote $|I|=\sum_1^t I_j$. Define $\h_{\mathbb{Z}\left[ \frac1N \right][x_1,\ldots,x_t]}:\mathbb{Z}\left[ \frac1N \right][x_1,\ldots,x_t] \rightarrow \mathbb{R}_{\geq 0}$ by
\[
\h_{\mathbb{Z}\left[ \frac1N \right][x_1,\ldots,x_t]}\left( \sum a_I x^I \right) = \max \left\{ \h_{\mathbb{Z}[1/N]}(a_I)+|I| \mid a_I\neq 0 \right\}.
\]
Claims \eqref{item:ht.+}, \eqref{item:ht.mod.p}, and \eqref{item:ht.mod.p.2} follow from the corresponding claims for $\mathbb{Z} \left[ \frac1N \right]$ (and the fact that $\mathbb{F}_p[x_1,\ldots,x_t]$ is a domain). 

Given $a(x)=\sum_{I \in \mathbb{Z}_{\geq 0} ^t}a_I x^I \in \mathbb{Z}\left[ \frac1N \right][x_1,\ldots,x_t]$, denote 
\[
 \supp(a(x)):=\left\{ I \mid a_I\neq 0 \right\}~\text{and}~\deg(a(x)):=\max \left\{ |I| \mid I\in \supp(a(x)) \right\},
\]
and note that
\begin{equation} \label{eq:support.R0}
\Big| \supp(a(x)) \Big| \leq \binom{\deg(a(x))+t}{t} < \Big(\deg(a(x))\Big)^{t+1}.
\end{equation}

If $a(x),b(x)\in \mathbb{Z} \left[ \frac1N \right] [x_1,\ldots,x_t]$ and $c(x)=a(x)b(x)$ then, for every $I\in \supp(c(x))$,
\[
\h_{\mathbb{Z}[1/N]}(c_I)=\h_{\mathbb{Z}[1/N]} \left( \sum_{J \leq I} a_J b_{I-J} \right) \leq 
\]
\[
\leq \log_N \left( \left| \supp(a(x)) \right| \right) + \max_J \left\{ \h_{\mathbb{Z}[1/N]}(a_Jb_{I-J}) \right\} \leq 
\]
\[
\leq (t+1) \log_N(\deg(a(x)))+\h_{\mathbb{Z} \left[ \frac1N \right] [x_1,\ldots,x_t]}(a(x))+\h_{\mathbb{Z} \left[ \frac1N \right] [x_1,\ldots,x_t]}(b(x))-|I|,
\]
where the first inequality is because the sum is over $J\in \supp(a(x))$ and \eqref{item:ht.+}, the second inequality is because of \eqref{eq:support.R0} and because $\h_{\mathbb{Z}[1/N]}(a_J) \leq \h_{R_0}(a(x))-|J|$ (and a similar claim for $b_{I-J}$). Thus
\begin{equation} \label{eq:ht.R_0.*.2}
\h_{\mathbb{Z} \left[ \frac1N \right] [x_1,\ldots,x_t]}(a(x)b(x)) \leq (t+2)\h_{\mathbb{Z} \left[ \frac1N \right] [x_1,\ldots,x_t]}(a(x))+\h_{\mathbb{Z} \left[ \frac1N \right] [x_1,\ldots,x_t]}(b(x)),
\end{equation}
and Claim \eqref{item:ht.*} follows by induction.

\subsubsection{Proof of Theorem \ref{thm:ht.exists} in general}

Choose a generating set $\alpha_1,\ldots,\alpha_s$ for $R$ as a $\mathbb{Z} \left[ \frac1N \right] [x_1,\ldots,x_t]$-module and define
\[
\h_R(r):=\min_{r=\sum a_i(x) \alpha_i} \max_i  \,\, \h_{\mathbb{Z} \left[ \frac1N \right] [x_1,\ldots,x_t]}(a_i(x)) .
\]
Claim \eqref{item:ht.+} clearly follows.

Choose $c_{i,j}^k(x)\in \mathbb{Z} \left[ \frac1N \right] [x_1,\ldots,x_t]$ such that $\alpha_i \alpha_j=\sum c_{i,j}^k(x) \alpha_k$. If $r_1=\sum a_i \alpha_i$ and $r_2=\sum b_j \alpha_j$ are the expressions that minimize $\h_R(r_1)$ and $\h_R(r_2)$, then
\[
\h_R(r_1 \cdot r_2) \leq \max_k \h_{\mathbb{Z} \left[ \frac1N \right] [x_1,\ldots,x_t]} \left( \sum_{i,j} a_i b_j c_{i,j}^k \right) \leq 2\log_N(s)+\max_{i,j,k} \h_{\mathbb{Z} \left[ \frac1N \right] [x_1,\ldots,x_t]}(a_i b_j c_{i,j}^k)
\]
\[
 \leq 2\log_K(s)+(t+1)\max_{i,j,k} \left\{ \h_{\mathbb{Z} \left[ \frac1N \right] [x_1,\ldots,x_t]}\left( c_{i,j}^k(x) \right) \right\}+(t+1)\h_R(r_1)+\h_R(r_2).
\]
Thus, denoting 
$$
 C_R=2\log_K (s)+ (t+1)\max_{i,j,k} \h_{\mathbb{Z} \left[ \frac1N \right] [x_1,\ldots,x_t]}(c_{i,j}^k),
$$
we get
\[
\h_R(r_1 \cdot r_2) \leq C_R+(t+1)\h_R(r_1)+\h_R(r_2)
\]
and \eqref{item:ht.*} follows by induction.

Next, we prove \eqref{item:ht.mod.p}, i.e., that there is a constant $C_R$ such that, for every $r\in R \smallsetminus 0$ and $p>C^{\h_R(r)}$, the image of $r$ in $R \otimes \mathbb{F}_p$ is not a zero divisor.

Let $\N_R: R \rightarrow \mathbb{Q}(x_1,\ldots,x_t)$ be the (restriction of the) norm map of the extension $\Frac(R)/ \mathbb{Q}(x_1,\ldots,x_t)$. Since $R$ is integral over $\mathbb{Z} \left[ \frac1N \right] [x_1,\ldots,x_t]$, we have $\N_R(R) \subseteq \mathbb{Z} \left[ \frac1N \right] [x_1,\ldots,x_t]$. The function sending $(a_1,\ldots,a_s)\in \left( \mathbb{Z} \left[ \frac1N \right] [x_1,\ldots,x_t] \right) ^s$ to $\N_R \left( \sum a_i \alpha_i \right)$ is homogeneous of degree $[\Frac(R): \mathbb{Q}(x_1,\ldots,x_t)]$. By \eqref{item:ht.+} and \eqref{item:ht.*}, there is a constant $A$ such that $\h_{R_0} \left( \N_R(r) \right) \leq A\h_R(r)$, for every $r\in R$.

Let $p>N$ be a prime number. $\N_R$ descends to a map $\N_{R \otimes \mathbb{F}_p}:R \otimes \mathbb{F}_p \rightarrow \mathbb{F}_p[x_1,\ldots,x_t]$. If $p$ splits in $F$ then $R \otimes \mathbb{F}_p=R_1 \oplus \cdots \oplus R_d$, where each $R_i$ is a domain and a finite extension of $\mathbb{F}_p[x_1,\ldots,x_t]$. Moreover, $\N_{R \otimes \mathbb{F}_p}(r_1,\ldots, r_d)=\prod_1^d \N_{R_i}(r_i)$, where $\N_{R_i}:R_i \rightarrow \mathbb{F}_p[x_1,\ldots,x_t]$ is the corresponding norm map.

Suppose that $r\in R \smallsetminus 0$ and that $p>N^{2A\h_R(r)}$ splits in $F$. Denote the image of $r\times 1$ in $R \otimes \mathbb{F}_p$ by $\overline{r}$. Since $\N_R(r)\neq 0$ and $\h_{\mathbb{Z} \left[ \frac1N \right] [x_1,\ldots,x_t]}(\N_R(r))<A\h_R(r)$, Claim \eqref{item:ht.mod.p} for $\mathbb{Z} \left[ \frac1N \right] [x_1,\ldots,x_t]$ implies that $\N_{R \otimes \mathbb{F}_p}(\overline{r})=\overline{N_R(r)}\neq 0$. Writing $\overline{r}=(r_1,\ldots,r_d)$, we get that $\N_{R_i}(r_i)\neq 0$, for every $i$. This implies that $r_i \neq 0$, so $\overline{r}$ is not a zero divisor.\\

A similar argument proves Claim \eqref{item:ht.mod.p.2}.

\subsection{Random homomorphisms}

\begin{lemma} \label{lem:size.Hom} Let $R \subseteq \mathbb{C}$ be a finitely generated ring. Denote the transcendence degree of the fraction field $\Frac(R)$ by $t$ and denote the algebraic closure of $\mathbb{Q}$ in $\Frac(R)$ by $F$. There is a constant $C$ such that, if $p>C$ is a prime number that splits in $F$ then
\[
\Big | \left| \Hom(R,\mathbb{F}_p) \right| - [F: \mathbb{Q}]p^t \Big| < C p^{t-\frac12}.
\]
\end{lemma}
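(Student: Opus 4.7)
The plan is to identify $\Hom(R,\mathbb{F}_p)$ with the set of $\mathbb{F}_p$-points of the affine scheme $\Spec R$, spread $\Spec R$ out to a well-behaved family over an open subscheme of $\Spec\mathcal{O}_F$, and then apply the Lang--Weil estimate to each geometric fiber after reducing modulo $p$.

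First, set $A:=R\cap F$. Since $\Frac R$ is a finitely generated field extension of $\mathbb{Q}$, its algebraic closure $F$ of $\mathbb{Q}$ is a number field, and $A$ is a finitely generated $\mathbb{Z}$-subalgebra of $F$, hence an order (after inverting finitely many primes). Thus there is $N_0$ with $A[1/N_0]=\mathcal{O}_F[1/N_0]$. Replacing $R$ by $R[1/N_0]$ changes $|\Hom(R,\mathbb{F}_p)|$ only for $p\mid N_0$, and this finite set of primes is absorbed into the constant $C$; we may therefore assume $R$ is a finitely generated $\mathcal{O}_F[1/N_0]$-algebra with $R\cap F=\mathcal{O}_F[1/N_0]$.

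Second, because $F$ is algebraically closed in $\Frac R$, the extension $\Frac R/F$ is regular, so the generic fiber $\Spec(R\otimes_{\mathcal{O}_F}F)$ is geometrically integral over $F$ of dimension $t$. By generic flatness combined with the constructibility of the locus over which fibers are geometrically integral, there exists an integer $M$ divisible by $N_0$ such that the structure map $\pi\colon \Spec R[1/M]\to\Spec\mathcal{O}_F[1/M]$ is flat with geometrically integral fibers of dimension $t$, and such that $\Spec R[1/M]$ is presented as a closed subscheme of some $\mathbb{A}^n_{\mathcal{O}_F[1/M]}$ cut out by equations of bounded degree.

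Finally, for any prime $p\nmid M$ that splits completely in $F$, we have $\mathcal{O}_F[1/M]\otimes_{\mathbb{Z}}\mathbb{F}_p\cong\mathbb{F}_p^{[F:\mathbb{Q}]}$, hence
\[
\Spec R[1/M]\times_{\mathbb{Z}}\Spec\mathbb{F}_p \;=\; \bigsqcup_{\mathfrak{p}\mid p}\pi^{-1}(\mathfrak{p}),
\]
a disjoint union of $[F:\mathbb{Q}]$ geometrically integral $\mathbb{F}_p$-varieties of dimension $t$, each cut out by equations whose number and degrees are bounded independently of $p$. The Lang--Weil estimate, in the form that makes the error term depend only on the ambient dimension and the degrees of the defining equations, then gives $|\pi^{-1}(\mathfrak{p})(\mathbb{F}_p)|=p^t+O(p^{t-1/2})$ with an implicit constant independent of $p$ and $\mathfrak{p}$; summing over the $[F:\mathbb{Q}]$ primes above $p$ yields the desired estimate. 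The main technical point is precisely the uniformity of this Lang--Weil constant across all large split primes, which is delivered by the spreading-out step: the closed embedding, and therefore the degrees and number of defining equations, is fixed once and for all over $\mathcal{O}_F[1/M]$ and does not depend on the prime to which one reduces.
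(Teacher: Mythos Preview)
Your overall strategy---spread out $\Spec R$ to a family with geometrically integral fibers and apply Lang--Weil uniformly---is the same as the paper's. The paper organizes it slightly differently (it base-changes $\Spec R$ to $F$ and tracks the $[F:\mathbb{Q}]$ absolutely irreducible components, rather than viewing $\Spec R$ over $\Spec\mathcal{O}_F$), but this is cosmetic.

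There is, however, a genuine gap in your first paragraph. The claim that $A:=R\cap F$ becomes $\mathcal{O}_F$ after inverting finitely many rational primes is false: there is no reason the algebraic closure of $\mathbb{Q}$ in $\Frac(R)$ should already sit inside $R\otimes\mathbb{Q}$. For a concrete counterexample take $R=\mathbb{Z}[\pi,\sqrt{2}\,\pi]\subset\mathbb{C}$. Then $\Frac(R)=\mathbb{Q}(\sqrt{2},\pi)$ and $F=\mathbb{Q}(\sqrt{2})$, but every element of $R$ is a $\mathbb{Z}$-combination of monomials $\pi^i(\sqrt{2}\,\pi)^j$, so $R\cap F=\mathbb{Z}$. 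No localization at an integer puts $\sqrt{2}$ into $R$, and hence $R$ is not an $\mathcal{O}_F$-algebra; your map $\pi\colon\Spec R[1/M]\to\Spec\mathcal{O}_F[1/M]$ simply does not exist.

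The repair is cheap. Since $F\subset\Frac(R)$, choose a nonzero $s\in R$ so that a set of generators of $\mathcal{O}_F$ lies in $R[s^{-1}]$; then $R[s^{-1}]$ is an $\mathcal{O}_F$-algebra with the same fraction field, so your spreading-out and Lang--Weil argument applies verbatim to $R[s^{-1}]$. The passage from $R$ to $R[s^{-1}]$ changes $|\Hom(\,\cdot\,,\mathbb{F}_p)|$ by the number of $\varphi$ with $\varphi(s)=0$, which is $O(p^{t-1})$ uniformly in $p$ by the elementary degree bound of Lemma~\ref{lem:DKL}, and this is absorbed in the error term. With this adjustment your proof is complete and matches the paper's.
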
 

\begin{proof} Denote $\mathbb{V}:=\Spec(R)$ and, for a ring $S$, denote by $\mathbb{V}_S$ the base change $\Spec(R\otimes S)$. Since $R$ is a domain, $\mathbb{V}_\mathbb{Q}$ is irreducible and has dimension $t$. By \cite[Corollary 4.5.10]{EGA_IV_3}, $\mathbb{V}_F$ has $[F: \mathbb{Q}]$ irreducible components $V_1,\ldots,V_{[F: \mathbb{Q}]}$ and they are all absolutely irreducible. Since $\mathbb{V}$ is irreducible, $\dim V_i=\dim \mathbb{V}_{\mathbb{Q}}=t$, for all $i$. 

Denote the integral closure of $\mathbb{Z}$ in $R$ by $O$. The $V_i$ are defined over some finite localization $O[1/N]$ of $O$. Choosing such a definition, we get $O[1/N]$-schemes $\widetilde{\mathbb{V}}_1,\ldots,\widetilde{\mathbb{V}}_{[F: \mathbb{Q}]}$ such that $\left( \widetilde{\mathbb{V}}_i \right)_F=V_i$. Since $\mathbb{V}_{\mathbb{Q}}=\cup V_i$, there is a constant $C_1$ such that $\mathbb{V}_{O[1/N]/ \mathfrak{p}}=\cup \left( \widetilde{\mathbb{V}}_i\right)_{O[1/N]/ \mathfrak{p}}$ for every prime ideal $\mathfrak{p} \triangleleft O[1/N]$ of norm greater than $C_1$. Since $\left( \widetilde{\mathbb{V}}_i \right)_F=V_i$ is absolutely irreducible, \cite[Proposition 9.7.8]{EGA_IV_3} implies that there is a constant $C_2>N$ such that all the varieties $(\widetilde{\mathbb{V}}_i)_{O[1/N]/ \mathfrak{p}}$ are absolutely irreducible for every prime ideal $\mathfrak{p} \triangleleft O[1/N]$ of norm greater than $C_2$.

Assume now that $p>\max \left\{ C_1,C_2 \right\}$ is a prime number that splits in $F$ as $pO=\mathfrak{p}_1 \cdots \mathfrak{p}_{[F: \mathbb{Q}]}$. Since $O[1/N]/ \mathfrak{p}_1=\mathbb{F}_p$, 

\[
\Big| \Hom(R,\mathbb{F}_p) \Big|=|\mathbb{V}(\mathbb{F}_p)|=\Big| \mathbb{V}_{O[1/N]/ \mathfrak{p}_1} (\mathbb{F}_p) \Big|=\Big| \bigcup_{i=1}^{[F: \mathbb{Q}]} \left( \widetilde{\mathbb{V}}_i \right)_{O[1/N]/ \mathfrak{p}_1}(\mathbb{F}_p) \Big|.
\]
By Lang--Weil, 
$$
 \left|\left( \widetilde{\mathbb{V}}_i\right)_{O[1/N] / \mathfrak{p}_1}(\mathbb{F}_p)\right|=p^t+O(p^{t-1/2})
 $$ 
 and 
 $$
 \left|\left(\left( \widetilde{\mathbb{V}}_i \right)_{O[1/N]/ \mathfrak{p}_1} \cap \left( \widetilde{\mathbb{V}}_j\right)_{O[1/N]/ \mathfrak{p}_1}\right)(\mathbb{F}_p) \right|=O(p^{t-1}),
$$ 
and the result follows.
\end{proof} 

\begin{lemma} \label{lem:DKL} (\cite[Claim 7.2]{DKL14}) Let $\mathbb{F}$ be a finite field and let $V$ be an affine variety of dimension $k$ and degree $d$ defined over $\mathbb{F}$. Then $|V(\mathbb{F})| \leq d \cdot | \mathbb{F} |^k$.
\end{lemma}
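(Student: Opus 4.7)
The plan is to prove Lemma~\ref{lem:DKL} by induction on $k=\dim V$. The base case $k=0$ is immediate: a zero-dimensional affine variety of degree $d$ consists of at most $d$ points, so $|V(\mathbb{F})|\le d$. For the inductive step, I would first reduce to the case that $V$ is irreducible: decomposing $V$ into irreducible components $V=V_1\cup\cdots\cup V_s$ with $\dim V_i=k_i$ and $\deg V_i=d_i$ (and $\sum_i d_i\le d$ in the convention used), the bound $|V_i(\mathbb{F})|\le d_i|\mathbb{F}|^{k_i}\le d_i|\mathbb{F}|^k$ applied to each $i$ sums to the required estimate.

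So assume $V\subseteq \mathbb{A}^n$ is irreducible of dimension $k\ge 1$. Since $V$ is not a single point, some coordinate function $x_1$ is nonconstant on $V$. For each $a\in \mathbb{F}$ set $V_a:=V\cap\{x_1=a\}$; since $x_1$ is nonconstant on the irreducible $V$, the hyperplane $\{x_1=a\}$ does not contain $V$, and a Bezout-type estimate for affine varieties yields $\dim V_a\le k-1$ and $\deg V_a\le \deg V=d$. The inductive hypothesis applied to $V_a$ then gives $|V_a(\mathbb{F})|\le d\cdot |\mathbb{F}|^{k-1}$, and summing over $a\in\mathbb{F}$ produces
\[
|V(\mathbb{F})|=\sum_{a\in \mathbb{F}}|V_a(\mathbb{F})|\le |\mathbb{F}|\cdot d\cdot |\mathbb{F}|^{k-1}=d\cdot|\mathbb{F}|^k,
\]
closing the induction.

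The main obstacle is the Bezout-type bound $\deg V_a\le \deg V$ for affine hyperplane sections: its cleanest justification passes through the degree of the projective closure of $V$ and Bezout's theorem in projective space, and one must verify that intersecting with a hyperplane not containing $V$ does not inflate the degree, including a careful accounting of any components that might live at infinity. An alternative plan that sidesteps the slicing is to use a Noether-normalization-style finite projection $\pi:V\to\mathbb{A}^k$ of degree at most $d$ for irreducible $V$, which gives $|V(\mathbb{F})|\le d\cdot|\mathbb{A}^k(\mathbb{F})|=d\cdot|\mathbb{F}|^k$ directly; the subtlety there is producing such a projection that is defined over $\mathbb{F}$ itself rather than only over its algebraic closure, especially when $|\mathbb{F}|$ is small relative to $d$.
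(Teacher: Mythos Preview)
The paper does not supply its own proof of this lemma; it merely cites \cite[Claim 7.2]{DKL14}. Your induction-by-hyperplane-slicing argument is correct and is exactly the proof given in that reference (where the degree of a reducible variety is taken to be the sum of the degrees of its irreducible components, so in fact $\sum_i d_i = d$ in your decomposition step).
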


\begin{lemma} \label{lem:random.specialization.nonzero} Let $R$ be a domain which is a finite extension of $\mathbb{Z}[\frac1N][x_1,\ldots,x_t]$, let $\h_R$ be a height function on $R$, and let $F$ be the algebraic closure of $\mathbb{Q}$ in $\Frac(R)$. There is a constant $C$ such that for every element $r\in R$ and every prime number $p$, if $p$ splits in $F$, the image of $r$ in $R \otimes \mathbb{F}_p$ is not a zero divisor, and $\varphi \in \Hom(R,\mathbb{F}_p)$ is uniformly distributed, then
\[
\Pr \left( \varphi(r)=0 \right) <\frac{C\h_R(r)}{p}.
\]
\end{lemma}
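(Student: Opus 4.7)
The plan is to interpret the event $\varphi(r)=0$ geometrically and then combine Lemma \ref{lem:size.Hom}, Lemma \ref{lem:DKL}, and a Bezout-style intersection bound. Set $\mathbb{V}:=\Spec(R)$ and $\mathbb{V}(r):=\Spec(R/(r))$. A uniformly distributed $\varphi\in\Hom(R,\mathbb{F}_p)$ is the same as a uniformly distributed $\mathbb{F}_p$-point of $\mathbb{V}$, and $\varphi(r)=0$ precisely when this point lies in $\mathbb{V}(r)(\mathbb{F}_p)$. Thus
\[
\Pr(\varphi(r)=0)=\frac{|\mathbb{V}(r)(\mathbb{F}_p)|}{|\mathbb{V}(\mathbb{F}_p)|},
\]
and the lemma reduces to upper bounding the numerator linearly in $\h_R(r)$ while lower bounding the denominator by a constant times $p^t$. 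The denominator estimate is immediate from Lemma \ref{lem:size.Hom}: for $p$ larger than some threshold depending only on $R$ and splitting in $F$, $|\mathbb{V}(\mathbb{F}_p)|\ge\tfrac12[F:\mathbb{Q}]p^t$. The finitely many smaller primes are absorbed into $C$ since the trivial bound $\Pr\leq 1$ suffices whenever $C\h_R(r)/p\geq 1$.

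For the numerator, I would fix once and for all the $\mathbb{Z}[1/N][x_1,\ldots,x_t]$-module generating set $\alpha_1,\ldots,\alpha_s$ used to define $\h_R$, and use the corresponding closed embedding $\mathbb{V}\hookrightarrow \mathbb{A}^{t+s}$ in coordinates $(x_1,\ldots,x_t,y_1,\ldots,y_s)$ with $y_i\leftrightarrow\alpha_i$. The total degree of this embedded scheme is bounded by a constant $D_R$ depending only on $R$. If $r=\sum a_i(x)\alpha_i$ is a representation achieving $\h_R(r)$, then $r$ is the restriction to $\mathbb{V}$ of the polynomial $\sum_i a_i(x)y_i$ on $\mathbb{A}^{t+s}$, whose total degree is at most $\h_R(r)+1$ by the very definition of the height on $\mathbb{Z}[1/N][x]$. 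Because the image of $r$ in $R\otimes\mathbb{F}_p$ is not a zero divisor, no irreducible component of $\mathbb{V}_{\mathbb{F}_p}$ lies inside $\{r=0\}$, so $\dim\mathbb{V}(r)_{\mathbb{F}_p}\le t-1$. A Bezout-type intersection bound then gives the total degree of $\mathbb{V}(r)_{\mathbb{F}_p}$ at most $D_R(\h_R(r)+1)$, and Lemma \ref{lem:DKL} yields
\[
|\mathbb{V}(r)(\mathbb{F}_p)|\leq D_R(\h_R(r)+1)\,p^{t-1}.
\]
Combining with the denominator estimate gives $\Pr(\varphi(r)=0)\le\frac{2D_R(\h_R(r)+1)}{[F:\mathbb{Q}]\,p}$, which takes the desired form $C\h_R(r)/p$ after enlarging $C$ to absorb the additive constant and the small-$p$ regime.

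The main obstacle is making the Bezout estimate precise: one needs to pass to suitable projective closures of $\mathbb{V}$ and its intersection with the hypersurface $\{r=0\}$, handle the possibly reducible or non-equidimensional intersection, and verify that the degree bound survives base change from $\mathbb{Z}[1/N]$ to $\mathbb{F}_p$ (which ultimately requires inverting only finitely many primes that can again be absorbed into $C$). These steps are standard algebraic geometry but require some care; everything else is direct bookkeeping from the previous lemmas.
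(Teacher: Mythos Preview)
Your proposal is correct and follows essentially the same route as the paper: interpret the probability as a ratio of $\mathbb{F}_p$-point counts on $\mathbb{V}=\Spec R$, lower-bound the denominator via Lemma~\ref{lem:size.Hom}, and upper-bound the numerator by combining a Bezout-type degree bound for $\mathbb{V}_r$ with Lemma~\ref{lem:DKL}, using that $r$ not being a zero divisor in $R\otimes\mathbb{F}_p$ forces $\dim(\mathbb{V}_r)_{\mathbb{F}_p}\le t-1$. The paper is terser---it simply fixes an affine embedding and asserts the degree bound $\deg(\mathbb{V}_r\times\Spec\mathbb{F}_p)\le C^2\h_R(r)$ without your explicit discussion of the embedding via module generators or the small-prime absorption---but the argument is the same.
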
 

\begin{proof} Denote $\mathbb{V}=\Spec R$ and choose an affine embedding $\mathbb{V} \hookrightarrow \mathbb{A} ^N$. Since $\mathbb{V}$ is the zero locus of finitely many polynomials, the degrees of $\mathbb{V} \times \Spec \mathbb{F}_p$, where $p$ ranges over all prime numbers, are bounded, say by a constant $C$. 

After possibly enlarging $C$, every $r\in R$ is the restriction of a polynomial of degree $\leq C\h_R(r)$. For $r\in R \smallsetminus 0$, let $\mathbb{V}_r \subseteq \mathbb{V}$ be the zero locus of $r$. By Bezout's theorem, the degree of $\mathbb{V}_r \times \Spec \mathbb{F}_p$ is at most $\left( \deg \mathbb{V} \times \Spec \mathbb{F}_p \right) \cdot \deg r \leq C^2\h_R(r)$.

The homomorphisms $\varphi \in \Hom(R,\mathbb{F}_p)=\mathbb{V}(\mathbb{F}_p)$ for which $\varphi(r)=0$ are in bijection with $\mathbb{V}_r(\mathbb{F}_p)$. By assumption, $r$ is not a zero divisor in $R \otimes \mathbb{F}_p$, so none of its restrictions to the irreducible components of $\mathbb{V} \times \Spec \mathbb{F}_p$ vanish. It follows that $\dim(\mathbb{V}_r \times \Spec \mathbb{F}_p)=t-1$ and by Lemma \ref{lem:DKL}, $\Big| \mathbb{V}_r(\mathbb{F}_p) \Big|< C^2 \h_R(r) p^{t-1}$. The claim now follows from Lemma \ref{lem:size.Hom}.
\end{proof}

\subsection{Generating pairs}

\begin{lemma} \label{lem:size.G} (\cite[Lemma 3.5]{Nor87}) If $G$ is a connected linear algebraic group defined over a finite field $\mathbb{F}$, then $\left( | \mathbb{F} |-1 \right)^{\dim G} \leq |G(\mathbb{F})| \leq \left( | \mathbb{F} |+1 \right)^{\dim G}$.
\end{lemma}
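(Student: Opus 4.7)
The plan is to prove the bounds by induction on $\dim G$, with the Lang--Steinberg theorem as the key tool. If $N \trianglelefteq G$ is a connected normal $\mathbb{F}$-subgroup, Lang's theorem gives $H^1(\mathrm{Gal}(\overline{\mathbb{F}}/\mathbb{F}), N(\overline{\mathbb{F}})) = 1$, so the sequence $1 \to N(\mathbb{F}) \to G(\mathbb{F}) \to (G/N)(\mathbb{F}) \to 1$ is exact. Hence $|G(\mathbb{F})| = |N(\mathbb{F})| \cdot |(G/N)(\mathbb{F})|$ and $\dim G = \dim N + \dim(G/N)$, so the desired multiplicative bounds of the form $(|\mathbb{F}| \pm 1)^{\dim}$ propagate through products. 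Applying this to the filtration $1 \subseteq R_u(G) \subseteq R(G) \subseteq G$ reduces the problem to three separate cases: $G$ unipotent, $G$ a torus, and $G$ semisimple.

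For a unipotent $U$ of dimension $n$, the perfectness of $\mathbb{F}$ implies that $U$ admits an $\mathbb{F}$-composition series with successive quotients isomorphic to $\mathbb{G}_a$, so Lang--Steinberg yields $|U(\mathbb{F})| = |\mathbb{F}|^n$ exactly. For a torus $T$ of dimension $n$ with $q = |\mathbb{F}|$, the Frobenius $F$ acts on the character lattice $X^*(T)$ with finite order, hence with eigenvalues that are roots of unity $\zeta_1, \ldots, \zeta_n$; the standard identity $|T(\mathbb{F})| = \det(qI - F)$ acting on $X^*(T) \otimes \mathbb{Q}$ then gives $|T(\mathbb{F})| = \prod_{i=1}^n (q - \zeta_i)$, and each factor has absolute value in $[q-1, q+1]$, which yields the bounds.

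For semisimple $G$, I would invoke Steinberg's order formula $|G(\mathbb{F})| = q^N \prod_{i=1}^r (q^{d_i} - \epsilon_i)$, where $N = |\Phi^+|$ is the number of positive roots of an $\mathbb{F}$-maximal torus, the $d_i$ are the degrees of the fundamental invariants of the Weyl group, and the $\epsilon_i$ are roots of unity arising from the Frobenius action on the root datum. Combining the identities $\dim G = r + 2N$ and $\sum d_i = N + r$ with the elementary inequalities $(q-1)^{d_i} \leq q^{d_i} - 1 \leq |q^{d_i} - \epsilon_i| \leq q^{d_i} + 1 \leq (q+1)^{d_i}$, multiplying everything together with the leading $q^N$ factor yields precisely $(q-1)^{\dim G} \leq |G(\mathbb{F})| \leq (q+1)^{\dim G}$. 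The main obstacle is this semisimple case: one must invoke the precise Steinberg formula (including the form valid for non-split quasi-split groups) and verify that the Frobenius eigenvalues on the root datum really are roots of unity. Everything else is careful bookkeeping around Lang--Steinberg.
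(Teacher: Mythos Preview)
The paper does not prove this lemma at all; it is stated with a citation to \cite[Lemma 3.5]{Nor87} and then used as a black box. So there is no ``paper's proof'' to compare against, only Nori's.

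Your argument is correct and is essentially the standard proof. A couple of small comments on the points you flagged as obstacles. First, in the torus case your formula $|T(\mathbb{F})|=\det(qI-F)$ is right once one notes (as you do) that the Galois action on $X^*(T)$ factors through a finite quotient, so $F$ there really is of finite order; a reader might momentarily confuse this $F$ with the $q$-power Frobenius on points. Second, the Steinberg order formula $|G(\mathbb{F}_q)|=q^{N}\prod_i(q^{d_i}-\epsilon_i)$ holds for every connected reductive group over $\mathbb{F}_q$, not only for simply connected or split ones: the degrees $d_i$ and the Frobenius eigenvalues $\epsilon_i$ on the space of basic $W$-invariants depend only on the root system with its Frobenius twist, and isogenous semisimple groups over $\mathbb{F}_q$ have the same number of rational points (Lang's theorem plus $|Z(\mathbb{F}_q)|=|H^1(\mathbb{F}_q,Z)|$ for finite commutative $Z$). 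So the ``main obstacle'' you identify is already handled by the standard references (e.g.\ Carter or Steinberg's Yale notes), and your inequalities $(q-1)^{d_i}\le |q^{d_i}-\epsilon_i|\le (q+1)^{d_i}$ together with $N+\sum d_i=\dim G$ finish the job exactly as you say.

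Nori's own proof is also by induction on $\dim G$ via Lang's theorem, with the same unipotent and torus base cases; for the semisimple step he argues slightly more directly through a Borel and the Bruhat decomposition rather than quoting the closed Steinberg formula, but the content is the same.
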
 

\begin{lemma} \label{lem:V} Let $R \subseteq \mathbb{C}$ be a finitely generated ring and let $\mathbb{G}$ be a group scheme over $R$ such that $\mathbb{G}(\mathbb{C})$ is connected, simply connected, and semisimple. There is an element $r\in R \smallsetminus 0$ and a subscheme $\mathbb{V} \subseteq \mathbb{G} \times_{\Spec R} \mathbb{G}$ such that \begin{enumerate}
\item For every field $k$ and every homomorphism $\varphi :R[r ^{-1}] \rightarrow k$, the algebraic group $\mathbb{G} ^ \varphi$ is connected, simply connected, and semisimple.
\item For every field $k$ and every homomorphism $\varphi : R[r ^{-1}] \rightarrow k$, $\mathbb{V} ^k \neq (\mathbb{G} ^ \varphi )^2$.
\item If $(g,h)\in \mathbb{G}(\mathbb{C})^2 \smallsetminus \mathbb{V}(\mathbb{C})$ then $g,h$ generate a Zariski dense subgroup of $\mathbb{G}(\mathbb{C})$.
\item For every prime number $p$ and every homomorphism $\varphi : R[r ^{-1} ] \rightarrow \mathbb{F}_p$, if $(g,h)\in \mathbb{G} ^ \varphi (\mathbb{F}_p)^2 \smallsetminus \mathbb{V} ^ \varphi(\mathbb{F}_p)$ then $g,h$ generate $\mathbb{G} ^ \varphi(\mathbb{F}_p)$.
\end{enumerate} 
\end{lemma}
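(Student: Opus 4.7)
The plan is to construct $\mathbb{V}$ as a union $\mathbb{V}_1 \cup \mathbb{V}_2$, where $\mathbb{V}_1$ captures pairs lying in a common $\mathbb{G}$-conjugate of a proper algebraic subgroup from a fixed finite list, and $\mathbb{V}_2$ captures pairs both of whose entries have bounded order. Claim~(1) is a standard spreading-out: by Grothendieck's constructibility results (EGA IV) and generic smoothness, after inverting an initial element $r_1 \in R$, the scheme $\mathbb{G}_{R[r_1^{-1}]}$ is smooth with geometrically connected, simply connected, semisimple fibers of a fixed root datum.

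For $\mathbb{V}_1$, I use the classification of maximal closed subgroups of a connected simply connected semisimple algebraic group: over the generic geometric fiber there are finitely many $\mathbb{G}$-conjugacy classes of proper closed subgroups of positive dimension that are maximal (Borel--Tits for parabolics; Dynkin, Liebeck--Seitz for the remaining types). Fix representatives $H_1, \ldots, H_m$. For each $i$, the set $W_i$ of pairs lying in a common $\mathbb{G}$-conjugate of $H_i$ is constructible, being the image of a morphism from $\mathbb{G} \times H_i \times H_i$. Since a generic pair in $\mathbb{G}^2$ generates a Zariski-dense subgroup and therefore lies in no $W_i$, each closure $\overline{W_i}$ is a proper closed subvariety. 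The Galois-invariant union $\bigcup_i \overline{W_i}$ is defined over $\mathbb{Q}$; spreading it out over a suitable localization $R[r^{-1}]$ and invoking Chevalley constructibility to ensure every fiber remains proper yields $\mathbb{V}_1$.

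For $\mathbb{V}_2$, let $n_0$ be the Larsen--Pink constant: for any algebraically closed field $k$, every finite subgroup of $\mathbb{G}^\varphi(k)$ of order $>n_0$ is contained in a proper algebraic subgroup of $\mathbb{G}^\varphi$. Define $\mathbb{V}_2 := \{(g,h) : g^{n_0!}=h^{n_0!}=1\} \subseteq \mathbb{G} \times_R \mathbb{G}$, a closed subscheme with proper fibers (since a positive-dimensional connected semisimple group contains elements of arbitrarily large order). Set $\mathbb{V} := \mathbb{V}_1 \cup \mathbb{V}_2$, proving Claim~(2). For Claim~(3), a pair $(g,h) \in \mathbb{G}(\mathbb{C})^2 \setminus \mathbb{V}(\mathbb{C})$ lies outside $\mathbb{V}_1(\mathbb{C})$, hence in no proper closed subgroup of $\mathbb{G}(\mathbb{C})$, so $\langle g,h\rangle$ is Zariski dense. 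For Claim~(4), a pair $(g,h) \notin \mathbb{V}^\varphi(\mathbb{F}_p)$ avoids $\mathbb{V}_2^\varphi$, so $|\langle g,h\rangle| > n_0$; by Larsen--Pink, $\langle g,h\rangle$ is either contained in a proper algebraic subgroup of $\mathbb{G}^\varphi$ (ruled out by $(g,h) \notin \mathbb{V}_1^\varphi$) or equals $\mathbb{G}^\varphi(\mathbb{F}_p)$.

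The main obstacle is the uniformity across residue characteristics: in small positive characteristic the classification of maximal closed subgroups of $\mathbb{G}^\varphi$ can include exotic classes absent from the generic fiber, so specializing the $H_i$'s may not cover every maximal proper subgroup. This is resolved by inverting the finitely many exceptional primes identified in the Liebeck--Seitz classification, which are absorbed into $r$; the same localization also takes care of the finitely many primes below which Larsen--Pink's constant would need to be enlarged.
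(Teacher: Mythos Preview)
Your strategy is quite different from the paper's. The paper defines $\mathbb{V}$ by two explicit polynomial conditions---linear dependence of $\Ad(w_1(x,y)),\ldots,\Ad(w_N(x,y))$ for a fixed list of words $w_i$ chosen so that the $\Ad(w_i(g_1,g_2))$ span $\bigoplus\End(\Lie G_i)$ for one Zariski-dense pair $(g_1,g_2)$, together with the condition $[x^{C_J!},y^{C_J!}]\in\mathbb{U}$, where $\mathbb{U}$ is the union of proper direct summands and $C_J$ is Jordan's constant. Claim~(3) then follows because outside $\mathbb{V}$ the Lie algebra of $\overline{\langle g,h\rangle}$ projects nontrivially to every simple factor and is invariant under a spanning set of $\Ad$-operators. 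Claim~(4) is proved via Nori's unipotent-generation machinery: the commutator condition forces $p\mid|\langle g,h\rangle|$ (using a prime-to-$p$ lift to $\GL_d(\mathbb{Z}_p)$ and Jordan), and then the $\Ad$-spanning condition produces enough independent $\log$'s of $p$-elements to show $|\langle g,h\rangle|\gg p^{\dim G}$. No classification of maximal subgroups, algebraic or finite, is used.

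Your argument for Claim~(4) has a genuine gap. The property you ascribe to the ``Larsen--Pink constant'' $n_0$---that over any algebraically closed $k$, every finite subgroup of $\mathbb{G}^\varphi(k)$ of order $>n_0$ lies in a proper algebraic subgroup---is \emph{false}: for $k=\overline{\mathbb{F}_p}$, the subgroup $\mathbb{G}^\varphi(\mathbb{F}_p)$ is Zariski dense and of order unbounded in $p$. What your argument actually requires is the dichotomy that every \emph{proper} subgroup of $\mathbb{G}^\varphi(\mathbb{F}_p)$ of order $>n_0$ lies in $H(\overline{\mathbb{F}_p})$ for some proper positive-dimensional closed subgroup $H$ of $\mathbb{G}^\varphi$. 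For large $p$ this does hold, but it is a consequence of the Liebeck--Seitz classification of maximal subgroups of finite groups of Lie type (specifically, the uniform bound on the orders of the non-generic almost-simple maximal subgroups), not of Larsen--Pink's theorem. Since you already invoke Liebeck--Seitz on the algebraic-group side, the fix is to cite it again here and absorb the resulting constants and exceptional primes into $n_0$ and $r$; but as written the key step rests on a false statement, and the actual input needed is substantially deeper than what you name. The paper's Nori-based route sidesteps this by never needing to know what the maximal subgroups of $\mathbb{G}^\varphi(\mathbb{F}_p)$ look like.
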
 

\begin{proof} Let $K$ be the fraction field of $R$. Write $\mathbb{G}(\mathbb{C})=\prod_{i\in I} G_i(\mathbb{C})$, where $G_i$ are simply connected almost simple groups. Let $U \subseteq \mathbb{G}(\mathbb{C})$ be the union of all proper summands of $\mathbb{G}(\mathbb{C})$:
\[
U=\bigcup_{i\in I} \prod_{j\neq i} G_j.
\]
Being $\Gal(\mathbb{C} / K)$-invariant, $U$ is defined over $K$. Choose a subscheme $\mathbb{U} \subseteq \mathbb{G}$ defined over $R$ such that $\mathbb{U} \times \Spec K=U$. By generic smoothness, \cite[Proposition 3.1.9]{Con}, \cite[15.6.5]{EGA_IV_3}, and the theory of specialization of $\pi_1^{et}$, the locus of geometric points $\xi$ of $\Spec R$ for which the fiber $\mathbb{G}_\xi$ is connected, simply connected, and semisimple is open. Similarly, the locus of geometric points $\xi$ for which $\mathbb{U}_\xi$ is the union of all proper summands of $\mathbb{G}_\xi$ is also open. Since this locus is non-empty, there is an element $r_1\in R \smallsetminus 0$ such that, for every algebraically closed field $k$ and every $\varphi \in \Hom(R[r_1 ^{-1}],k)$, the algebraic group $\mathbb{G}^ \varphi$ is connected, simply connected, and semisimple and $\mathbb{U}^ \varphi$ is the union of all proper summands of $\mathbb{G} ^ \varphi$. The first claim holds for every $r$ divisible by $r_1$.\\

By Jordan's theorem, there is a constant $C_J$ such that every finite subgroup of $\GL_d(\mathbb{C})$ has an abelian subgroup of index at most $C_J$. By \cite{dense},
there are two elements $g_1,g_2\in \mathbb{G}(\mathbb{C})$ that generate a Zariski dense free subgroup whose projection to every simple factor $G_i(\mathbb{C})$ is injective. Since $\langle g_1,g_2 \rangle$ is Zariski dense, the set $\left\{ \Ad(h) \mid h\in \langle g_1 , g_2 \rangle \right\}$ spans $\bigoplus_{i\in I} \End(\Lie(G_i))$. Equivalently, denoting $N:=\sum_{i\in I} \dim(G_i)^2$, there are words $w_1,\ldots,w_N$ such that $\Ad(w_1(g_1,g_2)),\ldots,\Ad(w_N(g_1,g_2))$ are linearly independent. By the condition on the projection of $\langle g_1,g_2 \rangle$, the element $\left[ g^{C_J !} , h^{C_J!} \right]$ does not belong to $U(\mathbb{C})$.

Define $\mathbb{V} \subseteq \mathbb{G} \times_{\Spec R} \mathbb{G}$ as the subscheme of pairs $(x,y)$ satisfying at least one of the following conditions: \begin{enumerate}
\item \label{item:V.Ad} $\Ad(w_1(x,y)),\ldots,\Ad(w_N(x,y))$ are linearly dependent.
\item \label{item:V.comm} $\left[ x^{C_J !} , y^{C_J!} \right]\in \mathbb{U}$
\end{enumerate} 
By construction, $(g_1,g_2)\in \mathbb{G}(\mathbb{C})^2 \smallsetminus \mathbb{V}(\mathbb{C})$, so $\dim \mathbb{V}(\mathbb{C}) < \dim \mathbb{G}(\mathbb{C}) ^2$. By the theorem on semicontinuity of dimensions of fibers, there is an element $r_2\in R$ such that the second claim holds for every $r$ divisible by $r_2$.\\

Suppose that $(g,h)\in \mathbb{G}(\mathbb{C})^2 \smallsetminus \mathbb{V}(\mathbb{C})$. Let $H$ be the Zariski closure of $\langle g,h \rangle$ and let $\mathfrak{h}$ be the Lie algebra of $H$. Since $(g,h)$ does not satisfy Condition \ref{item:V.comm}, $\mathfrak{h}$ projects nontrivially on every simple factor. Since $H$ is invariant under $\Ad(\langle g,h \rangle)$ and $(g,h)$ does not satisfy Condition \ref{item:V.Ad}, $\mathfrak{h}=Lie(\mathbb{G}(\mathbb{C}))$, so $H=\mathbb{G}(\mathbb{C})$. Thus, the third claim holds.\\

By assumption, $\mathbb{G} \subseteq \GL_d$, for some $d$. We will show that the fourth claim holds for $r=C(d)r_1r_2$, for some constant $C(d)>d$ that depends only on $d$. Suppose that $\varphi : R[r ^{-1}] \rightarrow \mathbb{F}_p$ and $(g,h)\in \mathbb{G} ^ \varphi (\mathbb{F}_p)^2 \smallsetminus \mathbb{V} ^ \varphi(\mathbb{F}_p)$. Since $p>C(d)>d$ and $\mathbb{G} ^ \varphi$ is semisimple, if $z\in \Mat_d(\mathbb{F}_p)$ is nilpotent, then $z\in \Lie(\mathbb{G})$ if and only if $\exp(z)\in \mathbb{G} ^ \varphi$. Since $\mathbb{G} ^ \varphi$ is simply connected, it decomposes as $\mathbb{G} ^ \varphi = H_1 \times \cdots \times H_m$, where $H_i$ are simple groups over $\mathbb{F}_p$ (but in general, not absolutely simple). Let $\pi_i: G^ \varphi \rightarrow H_i$ be the projection.

We claim that, for every $i$, the order of $\pi_i(\langle g,h \rangle)$ is divisible by $p$. Indeed, if the order were prime to $p$, then there would be a lift of $\pi_i(\langle g,h \rangle)$ to a subgroup of $\GL_d(\mathbb{Z}_p) \subseteq \GL_d(\mathbb{C})$. That would have implied that $\left[ \pi_i(g)^{C_J!} , \pi_i(h)^{C_J!} \right] =1$, a contradiction to $(g,h)\notin \mathbb{V} ^ \varphi(\mathbb{F}_p)$. Thus, for every $i$ there is an element $h_i\in \langle g,h \rangle$ such that $\pi_i(h_i)$ is a $p$-element. Raising $h_i$ to a large prime-to-$p$ power, we can further assume that $h_i$ is a $p$-element itself. Since $p>d$, the elements $\log(h_i)$ belong to $\Lie(\mathbb{G} ^ \varphi)$. 

Denote $D:=\dim \mathbb{G} ^ \varphi$. Since $(g,h)$ does not satisfy Condition \ref{item:V.Ad}, there are elements $\gamma_1,\ldots,\gamma_D\in \langle g,h \rangle$ and indexes $i_1,\ldots,i_D$ such that the elements $\gamma_j ^{-1} \log(h_{i_j}) \gamma_j=\log(\gamma ^{-1} h_{i_j} \gamma_j)$ are linearly independent. The map $(\mathbb{F}_p)^D \rightarrow \langle g,h \rangle$ given by
\[
(a_1,\ldots,a_D) \mapsto \left( \gamma_1 ^{-1} h_{i_1}^{a_1} \gamma_1 \right) \cdots \left( \gamma_D ^{-1} h_{i_D}^{a_D} \gamma_D \right)
\]
is the restriction of a polynomial map $\Phi :\mathbb{A} ^D \rightarrow \mathbb{G} ^ \varphi$ whose degree is at most $d^2$ and whose derivative at 0 is an isomorphism. By \cite[Proof of Theorem B]{Nor87}, there is a constant $C_0=C_0(d)$ such that $|\Phi(\mathbb{F}_p)|>\frac{1}{C_0}p^{\dim G}$. Since $\Phi (\mathbb{F}_p) \subseteq \langle g,h \rangle$ and by Lemma \ref{lem:size.G}, the index of $\langle g,h \rangle$ in $G(\mathbb{F}_p)$ is bounded. As $p \rightarrow \infty$, the minimal size of a non-trivial quotient of $G(\mathbb{F}_p)$ tends to infinity, so there is $C(d)$ for which the fourth claim holds.
\end{proof} 

\subsection{Walks on semisimple groups}

\begin{lemma} \label{lem:EMO} Let $G$ be a connected semisimple algebraic group over $\mathbb{C}$ and let $U \subseteq G$ be a proper subvariety. There is a constant $C_{EMO}$ such that if $Y \subseteq G(\mathbb{C})$ is a set that generates a Zariski dense subgroup, then $Y^{C_{EMO}}$ is not contained in $U$.
\end{lemma}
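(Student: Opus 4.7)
This is an escape-from-subvarieties lemma of Eskin--Mozes--Oh type. My plan is to handle first the pair case $Y=\{g,h\}$ via a descending chain argument, then reduce the general case to it.

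For the pair case, define for each $n$ the Zariski closed subvariety
\[
W_n:=\bigcap_{|w|\leq n}\mu_w^{-1}(U)\subseteq G(\mathbb{C})^2,
\]
where $w$ ranges over positive words of length $\leq n$ in two free variables and $\mu_w\colon G^2\to G$ is the word morphism. The sequence $(W_n)$ is descending, so by Noetherianity on $G^2$ it stabilizes at some $W_{C_0}$. A point $(g,h)\in W_{C_0}=\bigcap_n W_n$ has every positive word in $U$, so the Zariski closure of the subsemigroup $\bigcup_k\{g,h\}^k$ is contained in $U$. Using the standard fact that the Zariski closure of any subsemigroup of an algebraic group is a subgroup (proved by observing that left multiplication permutes the irreducible components of the closure), this closure is a proper closed subgroup of $G$, contradicting the Zariski density of $\langle g,h\rangle$. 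Hence $W_{C_0}$ is disjoint from the open locus $\Omega_2\subseteq G^2$ of Zariski-dense-generating pairs, which is nonempty by Lemma~\ref{lem:V}, proving that $\{g,h\}^{C_0}\not\subseteq U$ for all such $(g,h)$.

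For general $Y$ with $\langle Y\rangle$ Zariski dense, the semigroup-is-group argument applied to $\bigcup_k Y^k$ gives $\overline{\bigcup_k Y^k}^Z=G$, so by Noetherianity there is some $K$ with $\overline{Y\cup\cdots\cup Y^K}^Z=G$. Since this closure equals the finite union $\bigcup_{k\leq K}\overline{Y^k}^Z$ and $G$ is irreducible, some $\overline{Y^k}^Z$ must have full dimension and hence equal $G$. Then $Y^k$ is Zariski dense in $G$, so $Y^k\times Y^k$ is Zariski dense in $G\times G$ and meets $\Omega_2$; picking such a pair $(g,h)$ and applying the pair case yields a word in $\{g,h\}$ of length $\leq C_0$, hence in $Y^{kC_0}$, that is not in $U$.

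The main obstacle is that the index $K$ here a priori depends on $Y$, whereas $C_{EMO}$ must depend only on $G$ and $U$. To uniformize, I would apply the descending-chain argument a second time, now with $(G\times G,\mathbb{V})$ in place of $(G,U)$, where $\mathbb{V}\subseteq G\times G$ is the proper closed subvariety from Lemma~\ref{lem:V} whose complement equals $\Omega_2$. Iterating the pair-case argument and this reduction produces, after a finite number of steps bounded in terms of $\dim G$, a uniform constant $K_0=K_0(G)$ such that $Y^{K_0}\times Y^{K_0}$ meets $\Omega_2$ for every Zariski-dense-generating $Y$. Combined with the pair case for $(G,U)$, this yields $C_{EMO}=K_0\cdot C_0$, depending only on $G$ and $U$ as required.
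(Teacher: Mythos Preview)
Your pair-case argument is correct and clean: the descending chain $W_n\subseteq G^2$ stabilizes by Noetherianity, and the fact that the Zariski closure of a subsemigroup is a subgroup forces $W_{C_0}$ to miss every Zariski-dense-generating pair. This gives a uniform $C_0=C_0(G,U)$ for two-element $Y$.

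The genuine gap is in your uniformization step for general $Y$. Your iteration replaces $(G,U)$ by $(G\times G,\mathbb{V})$, but reducing the general case for $(G\times G,\mathbb{V})$ to its pair case requires finding a pair in $(Y\times Y)^{k'}\times(Y\times Y)^{k'}$ outside $\mathbb{V}_{G^2}$, which sends you to $(G^4,\mathbb{V}_{G^2})$, then to $(G^8,\mathbb{V}_{G^4})$, and so on. The ambient group doubles in dimension at each step, and nothing in your argument forces termination; the assertion that the process halts ``after a finite number of steps bounded in terms of $\dim G$'' is not justified. This is not a cosmetic issue: the non-uniform $K$ coming from the ascending chain $\overline{Y}\subseteq\overline{Y^2}\subseteq\cdots$ genuinely depends on $Y$, and your scheme does not eliminate that dependence.

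The argument the paper invokes (essentially \cite[Lemma~3.2]{EMO}, via \cite[Lemma~2.5]{uti}) proceeds differently: it stays inside $G$ and runs a descending chain on the subvariety. Starting from $V_0=U$, one finds $s\in Y$ with $sV_0^{\mathrm{top}}\neq V_0^{\mathrm{top}}$ (such $s$ exists since the stabilizer of the union of top-dimensional components is a proper closed subgroup, and $Y$ cannot be contained in it), and sets $V_1=V_0\cap s^{-1}V_0$, which has strictly fewer top-dimensional components. Iterating, each step drops either the dimension or the number of top-dimensional components; degree bounds via B\'ezout, depending only on $\deg U$, control the length of the chain uniformly in $Y$. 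If you want to rescue your own approach instead, one route is to prove separately that for connected semisimple $G$ there is $m=m(G)$ such that every Zariski-dense-generating $Y$ contains an $m$-element Zariski-dense-generating subset, and then run your Noetherian descending chain in $G^m$ rather than $G^2$; but that auxiliary bound itself requires an argument you have not supplied.
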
 

\begin{proof}
This follows from \cite[Lemma 2.5]{uti}. The proof is essentially the argument of \cite[Lemma 3.2]{EMO}.
\end{proof} 

\begin{lemma}\label{lem:c(G,S,U)} Let $G$ be a connected semisimple algebraic group over $\mathbb{C}$, let $U \subseteq G$ be a proper subvariety, and let $X \subseteq G(\mathbb{C})$ be a finite set that generates a Zariski dense subgroup. There is a positive scalar $c=c(G,U,X)$ such that $\mu_X^n(U(\mathbb{C}))<\frac1ce^{-cn}$.
\end{lemma}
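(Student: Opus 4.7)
I would prove the lemma by induction on a complexity invariant of $U$ (roughly, the lexicographic pair of $\dim U$ and the number of top-dimensional irreducible components), combining the single-block escape estimate from Lemma~\ref{lem:EMO} with Kesten's theorem for the non-amenable group $\gC = \langle X \rangle$.

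\textbf{Uniform single-block escape.} The first step is to upgrade Lemma~\ref{lem:EMO} to a statement that is uniform over translates of $U$. The constant $C_{EMO}$ in Lemma~\ref{lem:EMO} depends on $U$ only through invariants (dimension and degree) that are preserved under translation, so the same $C$ serves for every translate $gU$, $g\in G(\BC)$. Consequently $X^{C}$ is not contained in $gU$ for any $g$, which gives the uniform bound $\mu_X^{C}(gU)\le 1-\delta$ with $\delta=|X|^{-C}$.

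\textbf{Base case and inductive step.} For the base case $\dim U=0$, the variety $U$ is a finite set, and since $\gC$ is Zariski-dense in the semisimple group $G(\BC)$ it is non-amenable; Kesten's theorem then yields $\sup_{g\in\gC}\mu_X^n(\{g\})\le \rho^n$ for some $\rho=\rho(X)<1$, whence $\mu_X^n(U)\le |U|\rho^n$. For the inductive step ($\dim U>0$), I apply Lemma~\ref{lem:EMO} to the proper closed subgroup $\operatorname{Stab}(U)=\{h\in G: hU=U\}\subsetneq G$ (this is indeed proper because $U\subsetneq G$ and $G$ acts transitively on itself) to obtain $g_0\in X^{C'}$ with $g_0U\ne U$. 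Then the variety $V:=U\cap g_0^{-1}U$ is strictly smaller than $U$ in the lexicographic sense above, and the inductive hypothesis supplies exponential decay $\mu_X^n(V)\le e^{-\alpha n}/\alpha$. Using the event (of probability $\ge |X|^{-C'}$) that the next $C'$ steps of the walk equal the specific element $g_0$, one sets up an inequality relating $\mu_X^n(U)$, $\mu_X^{n+C'}(U)$ and $\mu_X^n(V)$ that, iterated together with the uniform escape bound from the first step, yields the claimed exponential decay for $U$.

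\textbf{Main obstacle.} The inductive step is the real technical hurdle. The naive conditional-probability comparison between $\mu_X^n(U)$ and $\mu_X^n(V)$ only yields a $(1-\delta)$-factor improvement at each block — which, iterated alone, would bound $\mu_X^n(U)$ only by a constant less than one, not by an exponentially decaying quantity. The point is that a random walk may well re-enter $U$ after escaping, so the iteration must be set up so that the exponentially small measure of $V$ forces the walk to spend exponentially little time on the "recurrent" part of $U$. Making this quantitative — perhaps by using several choices of $g_0$ to cover $U$ by lower-complexity pieces, or by a direct $L^2$/spectral-gap argument on $\gC$ — is the key step.
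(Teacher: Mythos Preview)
Your approach is entirely different from the paper's, and the gap you yourself flag under ``Main obstacle'' is genuine: the recursion you sketch does not close. From $V=U\cap g_0^{-1}U$ and the event that the first $C'$ steps of the walk spell $g_0$ one obtains $\mu_X^{n+C'}(U)\ge |X|^{-C'}\mu_X^{n}(g_0^{-1}U)$ and the trivial splitting $\mu_X^{n}(U)=\mu_X^{n}(V)+\mu_X^{n}(U\smallsetminus g_0^{-1}U)$, but no combination of these produces an inequality of the shape $\mu_X^{n+C'}(U)\le(1-\delta)\,\mu_X^{n}(U)+O(e^{-\alpha n})$, which is what one would need to iterate to exponential decay. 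The difficulty is precisely the one you name: the walk may exit $U$ and re-enter, so one escape per block gives only the uniform bound $\mu_X^{n}(U)\le 1-\delta$ for $n\ge C$, never $(1-\delta)^{n/C}$. The fixes you list (several $g_0$'s covering $U$, or an $L^2$/spectral argument on $\Gamma$) are not minor details but the whole substance of the lemma; an $L^2$ route, for instance, would require a spectral gap for the $\Gamma$-action on an infinite-dimensional space, which is essentially what super approximation supplies and what your argument is trying to do without.

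The paper bypasses this combinatorics entirely. It first specialises to the case where $G$, $U$, $X$ are defined over a number field (choosing a nearby $\overline{\BQ}$-point of the parameter scheme $\Spec R$; the condition $\dim U^{\varphi}<\dim G^{\varphi}$ is Zariski-open in the base and small perturbations of $X$ remain Zariski-dense), then restricts scalars to $\BQ$, and finally applies Theorem~\ref{thm:CSA}: for large primes $p$ the graphs $\Cay(\mathbb G(\mathbb F_p),X\bmod p)$ are $\epsilon_0$-expanders, so by Lemma~\ref{lem:RW.general} after $n\gtrsim\log|\mathbb G(\mathbb F_p)|$ steps the walk is nearly uniform on $\mathbb G(\mathbb F_p)$, giving $\mu_X^{n}(U)\le 2|\mathbb U(\mathbb F_p)|/|\mathbb G(\mathbb F_p)|=O(1/p)$. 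Choosing $p\asymp(1-\epsilon_0)^{-n/\dim G}$ then yields the exponential bound directly.
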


In the proof of Lemma \label{lem:c(G,S,U)}, we will use the following:

\begin{lemma} \label{lem:RW.general} Let $(V,E)$ be a regular $\epsilon$-expander graph, for some constant $\epsilon$, and let $v_0\in V$. Let $(x_k)$ be the simple random walk on $(V,E)$ that starts at $v_0$. For every subset $U \subseteq V$ and every $k \geq -\frac{\log|V|}{\log(1- \epsilon)}$,
\[
\Pr \left( x_k\in U\right) < 2\frac{|U|}{|V|}.
\]
\end{lemma}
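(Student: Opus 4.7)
The plan is to use the standard expander-mixing argument, transported from indicator functions to the $k$-step distribution of the walk.

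First, I would view the transition operator $M$ of the simple random walk as a self-adjoint operator on $\ell^2(V)$ with respect to counting measure. The $\epsilon$-expander hypothesis means that, on the orthogonal complement of the constants, $\|M\| \leq 1-\epsilon$ (so that every eigenvalue of $M$ other than the eigenvalue $1$ has absolute value at most $1-\epsilon$). Writing $\pi := \tfrac{1}{|V|}\mathbf{1}$ for the uniform distribution and decomposing the initial distribution as $\delta_{v_0} = \pi + f$ with $f = \delta_{v_0}-\pi$, the vector $f$ satisfies $\langle f, \mathbf{1}\rangle = 0$ and $\|f\|_2^2 = 1 - \tfrac{1}{|V|} < 1$. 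Since $M\pi = \pi$ and $M$ preserves the orthogonal complement of the constants,
\[
M^k \delta_{v_0} \;=\; \pi + M^k f, \qquad \|M^k f\|_2 \;\leq\; (1-\epsilon)^k \|f\|_2 \;<\; (1-\epsilon)^k.
\]

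Next, I would unfold the probability of landing in $U$ as an inner product and apply Cauchy--Schwarz:
\[
\Pr(x_k\in U) \;=\; \langle M^k\delta_{v_0}, \mathbf{1}_U\rangle \;=\; \frac{|U|}{|V|} + \langle M^k f, \mathbf{1}_U\rangle,
\]
\[
\bigl|\langle M^k f, \mathbf{1}_U\rangle\bigr| \;\leq\; \|M^k f\|_2 \cdot \|\mathbf{1}_U\|_2 \;\leq\; (1-\epsilon)^k \sqrt{|U|}.
\]

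Finally, the hypothesis $k \geq -\log |V| / \log(1-\epsilon)$ is exactly the condition $(1-\epsilon)^k \leq 1/|V|$. Combined with the trivial bound $\sqrt{|U|} \leq |U|$ for nonempty $U$, this gives
\[
(1-\epsilon)^k \sqrt{|U|} \;\leq\; \frac{\sqrt{|U|}}{|V|} \;\leq\; \frac{|U|}{|V|},
\]
so $\Pr(x_k\in U) \leq 2|U|/|V|$, with strict inequality because $\|f\|_2 < 1$. (The case $U = \emptyset$ is trivial.) I do not expect any real obstacle: this is the classical expander mixing lemma applied to the one-point initial distribution, and the only point to be careful about is that the $\epsilon$-expander assumption controls the spectrum on \emph{both} sides, so there is no parity/bipartite issue with the simple (non-lazy) walk.
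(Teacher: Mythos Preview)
Your proof is correct and is essentially identical to the paper's own argument: decompose $\delta_{v_0}$ into the uniform distribution plus a vector orthogonal to the constants, apply $M^k$ and Cauchy--Schwarz, then use $(1-\epsilon)^k \leq 1/|V|$ together with $\sqrt{|U|} \leq |U|$. You are simply slightly more explicit than the paper about the strict inequality (via $\|f\|_2<1$) and the trivial case $U=\emptyset$.
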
 

\begin{proof} Let $M\in \End \left( \mathbb{C}^{V} \right)$ be the Markov (i.e. averaging) operator corresponding to the random walk and let $1=\lambda_1,\lambda_2,\ldots,\lambda_{|V|}$ be its eigenvalues. By assumption, $| \lambda_i |<1- \epsilon$, for $i=2,\ldots,|V|$. Let $1_{v_0},1_U,1_V\in \mathbb{C} ^{V}$ be the indicator functions of $\left\{ v_0 \right\}$, $U$, and $V$, respectively. We have that $1_{v_0}=\frac{1}{|V|} 1_V+\phi$, where $\phi$ is in the span of the eigenvectors corresponding to $\lambda_2,\ldots,\lambda_n$. Thus,
\[
\Pr(x_k \in U) = \left\langle M^k 1_{v_0} , 1_U \right\rangle = \left \langle \frac{1}{|V|}1_V , 1_U\right\rangle + \left\langle M^k \phi , 1_U \right\rangle \leq \frac{|U|}{|V|}+(1- \epsilon)^k \cdot \| \phi \| \cdot \|1_U\| \leq \frac{|U|+\sqrt{|U|}}{|V|},
\]
and the result follows.
\end{proof}

\begin{proof}[Proof of Lemma \ref{lem:c(G,S,U)}] {\bf Case 1: $G,U$ are defined over $\mathbb{Q}$ and $X \subseteq G(\mathbb{Q})$:} This follows from Super Approximation for groups over $\mathbb{Q}$ (Theorem \ref{thm:CSA}). Choose $\mathbb{Z}$-models $\mathbb{G},\mathbb{U}$ of $G,U$. For all but finitely many primes $p$, $\dim \mathbb{U} \times \Spec \mathbb{F}_p < \dim \mathbb{G} \times \Spec \mathbb{F}_p=\dim G$. By Lemmas \ref{lem:DKL} and \ref{lem:size.G}, there is a constant $C_0$ such that $| \mathbb{G} (\mathbb{F}_p) | \geq \frac{1}{C_0} p^{\dim G}$ and $| \mathbb{U} (\mathbb{F}_p) | < C_0 p^{\dim G-1}$ for all $p$. By Theorem \ref{thm:CSA}, there are $N_0,\epsilon_0$ such that $\Cay(\mathbb{G} ^ \varphi (\mathbb{F}_p),X\text{ (mod $p$)})$ is an $\epsilon_0$-expander, for every prime $p>N_0$. Supposing, in addition, $n \geq -\frac{\log | \mathbb{G}(\mathbb{F}_p)|}{\log(1- \epsilon_0)}$ , there is a prime $p$ in the interval $\left[ \frac{1}{(1- \epsilon_0)^{\frac{n}{\dim G}}},\frac{2}{(1- \epsilon_0)^{\frac{n}{\dim G}}}\right]$. By Lemma \ref{lem:RW.general} 
\[
\mu_{X \text{ (mod $p$)}}^{*n}(\mathbb{U}(\mathbb{F}_p)) \leq 2 \frac{| \mathbb{U} (\mathbb{F}_p)|}{| \mathbb{G} (\mathbb{F}_p) |} \leq \frac{2C_0^2}{p} \leq 2C_0^2(1- \epsilon_0)^n,
\]
which implies the claim.

{\bf Case 2: $G,U$ are defined over a number field $K$ and $X \subseteq G(K)$:} The claim follows from Case 1 by restriction of scalars.

{\bf Case 3: The general case:} There is a finitely generated ring $R \subseteq \mathbb{C}$ such that $G$ and $U$ are defined over $R$ and $X \subseteq G(R)$. Choose generators $r_1,\ldots,r_m$ of $R$, let $I:=\left\{ f(x_1,\ldots,x_m) \mid f(r_1,\ldots,r_m)=0 \right\}$ be the ideal of polynomial relations between the $r_i$'s, and let $Y:=Z_I \subseteq \mathbb{A}^m$ be the zero locus of $I$. For every field $k$ of characteristic 0, the set $Y(k)$ is in a natural bijection with the set $\Hom(R,k)$, where a tuple $\vec{a}=(a_1,\ldots,a_m)\in Y(k)$ corresponds to the unique homomorphism $\varphi_{\vec{a}}$ sending $r_i$ to $a_i$. For $\vec{r}=(r_1,\ldots,r_m)\in Y(\mathbb{C})$ the homomorphism $\varphi_{\vec{r}}$ is the original embedding $R \hookrightarrow \mathbb{C}$. By semicontinuity of dimensions of fibers, the locus of points $\vec{y}\in Y$ for which $\dim U^{\varphi_{\vec{y}}} < \dim G^{\varphi_{\vec{y}}}$ is a Zariski open neighborhood $W$ of $\vec{r}$. Moreover, $W$ is defined over $\mathbb{Q}$. It follows that there are points in $W(\overline{\mathbb{Q}})$ that are arbitrarily close to $\vec{r}$. Since small perturbations of $X$ will still generate Zariski dense subgroups, it follows that there is a number field $K$ and a homomorphism $\varphi :R \rightarrow K$ such that $\varphi(X)$ generates a Zariski dense subgroup in $G^ \varphi$ and $U^ \varphi$ is a proper subvariety of $G^ \varphi$. If $X=\left\{ \gamma_1,\ldots,\gamma_n \right\}$ and $w$ is a word such that $w(\gamma_1,\ldots,\gamma_n)\in U$, then $w(\varphi(\gamma_1),\ldots,\varphi(\gamma_n))\in U^ \varphi$. Thus, the claim follows from Case 2 for $G^ \varphi$, $U^ \varphi$, and $\varphi(\Gamma)$.
\end{proof} 
%
%
\subsection{A criterion for expansion}

The following is a consequence of remarkable theorems of Bourgain--Gamburd, Breuillard--Green--Tao, Pyber--Szabo, and Landazuri--Seitz:

\begin{theorem} \label{thm:Bourgain.Gamburd} For every $r\in \mathbb{N}$ and $\delta >0$ there is $\epsilon >0$ such that the following holds: if $p$ is a prime number, $G$ is a simply connected semisimple group over $\mathbb{F}_p$ of rank at most $r$, $X \subseteq G(\mathbb{F}_p)$ is a symmetric generating set, and there is an integer $n < \log p$ such that, for all proper subgroups $H \subsetneq G(\mathbb{F}_p)$, $\mu_X^n(H)<p^ {-\delta}$, then the first nonzero eigenvalue of $\Cay(G(\mathbb{F}_p),X)$ is greater than $\epsilon$.
\end{theorem}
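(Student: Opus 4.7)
The plan is to follow the Bourgain--Gamburd machine, which derives a spectral gap from three ingredients: (i) a non-concentration estimate for the random walk in proper subgroups (supplied as a hypothesis of the theorem); (ii) a product theorem in bounded-rank finite groups of Lie type (due to Breuillard--Green--Tao and Pyber--Szab\'o); and (iii) a quasi-randomness bound on the minimal dimension of a nontrivial $G(\mathbb{F}_p)$-representation (due to Landazuri--Seitz). Together these reduce the theorem to well-documented black boxes.

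First I would recast the spectral gap question as an $L^2$-flattening statement for the convolution $\mu_X^{*k}$. Specifically, it suffices to produce $\kappa = \kappa(r,\delta) > 0$ such that, for some $k = O(\log p)$, one has $\|\mu_X^{*k}\|_2^2 \leq |G(\mathbb{F}_p)|^{-1+\kappa}$, where the implicit constant depends only on $r$ and $\delta$. The standard argument then proceeds in stages: so long as $\|\mu_X^{*k}\|_2^2$ exceeds $|G(\mathbb{F}_p)|^{-1+\kappa}$, the non-abelian Balog--Szemer\'edi--Gowers lemma extracts an approximate subgroup $A \subseteq G(\mathbb{F}_p)$ on which $\mu_X^{*k}$ concentrates. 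The product theorem then forces $A$ to be either of bounded size, essentially contained in a proper subgroup, or essentially all of $G(\mathbb{F}_p)$. The bounded-size alternative is ruled out by the lower bound on $|X^k|$ coming from the escape-from-subvarieties lemma (Lemma \ref{lem:c(G,S,U)} style, or Larsen--Pink); the proper-subgroup alternative is ruled out precisely by the hypothesis $\mu_X^n(H) < p^{-\delta}$ (applied after convolving to $k$ a bounded multiple of $n$, which preserves non-concentration up to adjusting $\delta$ since $kn < C \log p$). Hence $\|\mu_X^{*k}\|_2^2$ decays geometrically at each stage, reaching the target within $O(\log p)$ iterations.

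The final step converts flattening to a spectral gap via Landazuri--Seitz: every nontrivial complex irreducible representation of $G(\mathbb{F}_p)$ has dimension at least $c_1 \cdot p^{c_2}$ for positive constants depending only on $r$. Writing the averaging operator $T_X$ on $\ell^2_0(G(\mathbb{F}_p))$ and expanding $\mu_X^{*2k} = \mu_X^{*k} * \check\mu_X^{*k}$, one obtains $\|\mu_X^{*2k} - u\|_2^2 \geq \lambda_1(\Cay(G(\mathbb{F}_p),X))^{2k} \cdot (\dim \pi)^{-1}$ for the irreducible $\pi$ achieving the second eigenvalue. If $\lambda_1$ were close to $1$, the left-hand side would be at most $|G(\mathbb{F}_p)|^{-1+\kappa}$ while the right-hand side would exceed it by Landazuri--Seitz, a contradiction. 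This produces the desired $\epsilon = \epsilon(r,\delta) > 0$.

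The main obstacle is the product theorem step, where one needs uniformity over all simply connected semisimple groups of rank at most $r$ and over all primes $p$. This is precisely the content of Breuillard--Green--Tao and Pyber--Szab\'o, which we invoke as a black box. A secondary subtlety is that the hypothesis gives non-concentration only at a single scale $n$; verifying that the Bourgain--Gamburd iteration genuinely only needs one such scale (rather than non-concentration at all intermediate scales) requires a small bookkeeping argument comparing $\mu_X^{*k}$ for $k$ a multiple of $n$ to $\mu_X^{*n}$, using Young's inequality and the symmetry of $X$.
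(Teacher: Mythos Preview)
Your proposal is correct and matches the paper's approach exactly: the paper does not give a self-contained proof but simply records (in the remark following the theorem) that the statement is the Bourgain--Gamburd machine with the quasirandomness input supplied by Landazuri--Seitz, the product-theorem input supplied by Breuillard--Green--Tao or Pyber--Szab\'o, and the non-concentration input supplied as the hypothesis. Your write-up is in fact more detailed than the paper's; the one slip is the appeal to Lemma~\ref{lem:c(G,S,U)} (a characteristic-zero escape-from-subvarieties statement) for the small-$|A|$ alternative, which is unnecessary here since that alternative is already absorbed by the proper-subgroup case via the non-concentration hypothesis.
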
 

\begin{remark} We explain how Theorem \ref{thm:Bourgain.Gamburd} follows from the literature. In \cite{BoGa08}, Bourgain and Gamburd prove a criterion for expansion in a Cayley graph $\Cay(\Delta,X)$ of a finite group $\Delta$. There are three conditions in the criterion. The first is a lower bound for the minimal dimension of a nontrivial representation of $\Delta$. For $\Delta=G(\mathbb{F}_p)$, this condition always holds by \cite{LaSe74}. The second condition is about approximate subgroups in $\Delta$. For $\Delta=G(\mathbb{F}_p)$, this condition always holds by \cite{BGT11} or \cite{PySa16}. The third condition (which is the only condition that involves $X$) is the condition in Theorem \ref{thm:Bourgain.Gamburd}.
\end{remark} 

\subsection{Proof of Theorem \ref{thm:prob.SSA}}

Let $R,X,\Gamma$ be as in the theorem. Denote the fraction field of $R$ by $K$ and the Zariski closure of $\Gamma$ by $G$. Being invariant under $\Gal(\mathbb{C} / K)$, the algebraic group $G$ is defined over $K$. Choose a group scheme $\mathbb{G} \subseteq \GL_d$ defined over $R$ such that $\mathbb{G} \times \Spec K=G$. Since $X \subseteq G \cap \GL_d(R)=\mathbb{G}(R)$, Clain \ref{item:prob.SSA.R} of Theorem \ref{thm:prob.SSA} holds.\\

Let $r_1\in R \smallsetminus 0$ and $\mathbb{V} \subseteq \mathbb{G} \times_{\Spec R} \mathbb{G}$ be obtained by applying Lemma \ref{lem:V} to $R$ and $\mathbb{G}$. Choose polynomials $f_1,\ldots,f_M$ that generate the ideal of $\mathbb{V}$. Since $\Gamma \times \Gamma$ is Zariski dense in $\mathbb{G}(\mathbb{C})^2$, there are elements $g_1,g_2\in \Gamma$ such that $(g_1,g_2)\notin \mathbb{V}(\mathbb{C})$. Consequently, there is $1 \leq i \leq M$ such that $f_i(g_1,g_2)\neq 0$.

We show that Claim \ref{item:prob.SSA.SA} of Theorem \ref{thm:prob.SSA} holds for every element $r\in R \smallsetminus 0$ that is divisible by $r_1 \cdot f_i(g_1,g_2)$. Indeed, let $\varphi: R[r ^{-1}] \rightarrow \mathbb{F}_p$. The first claim of Lemma \ref{lem:V} says that $\mathbb{G} ^ \varphi$ is semisimple. In addition, $\varphi(f_i(g_1,g_2))\neq 0$, so $(\varphi(g_1),\varphi(g_2))\notin \mathbb{V} ^ \varphi(\mathbb{F}_p)$. The fourth claim of Lemma \ref{lem:V} implies that $\varphi (\Gamma) = \mathbb{G} ^ \varphi(\mathbb{F}_p)$.\\

Let $t$ be the transcendence degree of $K/ \mathbb{Q}$. By Noether's normalization theorem, there is a natural number $N$ such that $R[(Nr_1f_i(g_1,g_2)) ^{-1}]$ is a finite extension of $\mathbb{Z}[1/N][x_1,\ldots,x_t]$. Let $F$ be the algebraic closure of $\mathbb{Q}$ in $\Frac(R)$. We will show that Claim \ref{item:prob.SSA.Super} of Theorem \ref{thm:prob.SSA} holds for $F$ and $r=Nr_1f_i(g_1,g_2)$. 

Fix $\eta >0$. By Claims \ref{item:prob.SSA.R}, \ref{item:prob.SSA.SA}, and Theorem \ref{thm:Bourgain.Gamburd}, it is enough to show that there is $\delta >0$ with the following property: if $p$ is a large enough prime number that splits in $F$ then there is a number $m<\log p$ such that if $\varphi$ is a uniformly distributed random element of $\Hom(R[r ^{-1}],\mathbb{F}_p)$ then
\begin{equation} \label{eq:RW.Delta}
\Pr \left( \text{$\mu_{\varphi(X)}^{*m}(\Delta)<p^{-\delta}$ for every proper subgroup $\Delta \subsetneq \mathbb{G} ^ \varphi(\mathbb{F}_p)$}\right) > 1-\frac{1}{p^{1- \eta}}.
\end{equation}
Denote $S:=R[r ^{-1}]$ and let $\h_S$ be the height function obtained by applying Theorem \ref{thm:ht.exists} to $S$. We extend $\h_S$ to $\mathbb{G}(S)$ by $\h_S \left( (a_{i,j}) \right)=\max \left\{ \h_S(a_{i,j}) \right\}$. By Theorem \ref{thm:ht.exists}, there is a constant $C$ such that \begin{enumerate}
\item \label{item:ht.X} For every $m$ and every $g\in X^m$, $h_S(g)<Cm$.
\item \label{item:ht.V} For every $i\in \left\{ 1,\ldots,M \right\}$ and every $g,h\in \mathbb{G}(R)$, $\h_S(f_i(g,h))<C\max \left\{ \h_S(g),\h_S(h) \right\}$.
\end{enumerate}

Let $\alpha>0$ be a small constant to be set later. Suppose that $p$ is a prime that splits in $F$ and consider a uniformly distributed random homomorphism $\varphi : R[r ^{-1}] \rightarrow \mathbb{F}_p$. Set $m= \alpha \log p$. For $\alpha$ small enough, the height estimate \ref{item:ht.X} implies $C_S^{\h_S(g)}<p$, for every $g\in X^{2m}$, so Theorem \ref{thm:ht.exists}\eqref{item:ht.mod.p} and Lemma \ref{lem:random.specialization.nonzero} imply that
\[
\Pr \left( \text{$\varphi$ is not injective on $X^m$} \right) < |X|^{2m} \frac{C m}{p}.
\]
Decreasing $\alpha$ if needed, we have
\begin{equation} \label{eq:phi.1-1}
\Pr \left( \text{$\varphi$ is injective on $X^m$} \right)>1-\frac{1}{2p^{1- \eta}}.
\end{equation}
Let $C_{EMO}$ be the constant obtained by applying Lemma \ref{lem:EMO} to $G \times G$ and $\mathbb{V}\times \Spec \mathbb{C}$. Similarly to \eqref{eq:phi.1-1}, if $\alpha$ is small enough then \ref{item:ht.X}, \ref{item:ht.V}, and Lemma \ref{lem:random.specialization.nonzero} imply that
\begin{equation} \label{eq:phi.V}
\Pr \left( (\forall g,h\in X^{mC_{EMO}}) \text{ if $(g,h)\notin \mathbb{V}(\mathbb{C})$ then $(\varphi(g),\varphi(h))\notin \mathbb{V} ^ \varphi(\mathbb{F}_p)$} \right)>1-\frac{1}{2p^{1- \eta}}.
\end{equation}
Thus, in order to prove Claim \ref{item:prob.SSA.Super} of Theorem \ref{thm:prob.SSA} it is enough to show that if $\varphi$ satisfies both properties \eqref{eq:phi.1-1} and \eqref{eq:phi.V} then it satisfies property \eqref{eq:RW.Delta}.\\

Let $\Delta \subsetneq \mathbb{G} ^ \varphi(\mathbb{F}_p)$ and denote $Y:=X^m \cap \varphi ^{-1} (\Delta)$. We claim that $\langle Y \rangle$ is not Zariski dense. Assuming the contrary, the set $Y \times Y$ generates a Zariski dense subgroup of $G \times G$, so Lemma \ref{lem:EMO} implies that there is a pair $(g,h)\in Y^{C_{EMO}} \times Y^{C_{EMO}}$ such that $(g,h)\notin \mathbb{V}(\mathbb{C})$. Since $\varphi$ satisfies property \eqref{eq:phi.V}, we get that $(\varphi(g),\varphi(h))\notin \mathbb{V}(\mathbb{F}_p)$, so $\varphi(g),\varphi(h)$ generate $\mathbb{G} ^ \varphi(\mathbb{F}_p)$, a contradiction.

Let $H=\overline{\langle Y \rangle}^Z$. We have 
\[
\mu_{\varphi(X)}^{*m}(\Delta)= \mu_X^{*m}(\varphi ^{-1}(\Delta) \cap X^m) \leq \mu_X^{*m}(H(\mathbb{C})) \leq e^{-c m}=p^{-\alpha c t},
\]
where the equality on the left is because $\varphi$ is injective on $X^m$, the first inequality is by the definition of $H$, the second inequality is by Lemma \ref{lem:c(G,S,U)}, and the rightmost equality is by the definition of $m$. Thus, property \eqref{eq:RW.Delta} holds with $\delta=\alpha c$.

\section{Mixed identities of linear groups}
\subsection{Necessary conditions.}

\begin{lemma}\label{lem:4.5}
Let $\gC$ be a group satisfying one of the following properties:
\begin{enumerate}
\item $\gC$ has a nontrivial virtually solvable subgroup $A$ whose normalizer $N_\gC(A)$ has finite index in $\gC$.
\item $\gC$ has a nontrivial subgroup $B$ whose normalizer $N_\gC(B)$ has finite index in $\Gamma$ such that the centralizer $C_\gC(B)$ is nontrivial.
\end{enumerate}
Then $\gC$ satisfies a nontrivial mixed identity.
\end{lemma}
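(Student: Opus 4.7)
The plan is to exhibit an explicit nontrivial mixed identity in each case. The workhorse will be the elementary fact that if $H \leq \Gamma$ has index $n$, then the action of $\Gamma$ on $\Gamma/H$ gives a map $\Gamma \to S_n$ whose kernel lies in $H$, so $\gamma^{n!} \in H$ for every $\gamma \in \Gamma$. I will apply this with $H = N_\Gamma(A)$ (respectively $N_\Gamma(B)$) to guarantee that a fixed power of any element normalizes $A$ (resp.\ $B$).

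For Case (2) the approach is direct. Setting $n := [\Gamma : N_\Gamma(B)]$ and picking nontrivial $b \in B$ and $c \in C_\Gamma(B)$, for every $\gamma \in \Gamma$ we have $\gamma^{n!} b \gamma^{-n!} \in B$, which $c$ centralizes. I therefore propose the mixed word
\[
w(x) := [c, x^{n!} b x^{-n!}] = c \, x^{n!} b x^{-n!} c^{-1} x^{n!} b^{-1} x^{-n!},
\]
which is an alternating reduced product in $\Gamma * \langle x \rangle$ because none of $b, c, b^{-1}, c^{-1}$ is trivial, and hence is a nontrivial mixed identity on $\Gamma$.

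For Case (1) I would split according to whether $A$ has a nontrivial normal solvable subgroup. First I plan to establish that the solvable radical $R(A)$, defined as the union of all normal solvable subgroups of $A$, is a well-defined characteristic solvable subgroup. Fixing a normal solvable subgroup $N$ of finite index in $A$, any normal solvable $M \triangleleft A$ satisfies $\mathrm{dl}(NM) \leq \mathrm{dl}(N) + \log_2 |A/N|$, because $NM/N \cong M/(M \cap N)$ is solvable and sits inside the finite group $A/N$; all such products therefore have uniformly bounded derived length, form a directed family, and their union is solvable. Automorphisms permute normal solvable subgroups, so $R(A)$ is characteristic in $A$.

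In the sub-case $R(A) \neq 1$ I will take $B$ to be the last nontrivial term of the derived series of $R(A)$; this is a nontrivial abelian subgroup, characteristic in $R(A)$ and hence characteristic in $A$, so $\gamma^{n!} \in N_\Gamma(A)$ preserves $B$, and the identity $[x^{n!} b x^{-n!}, b] = 1$ holds for any nontrivial $b \in B$. In the sub-case $R(A) = 1$, the normal core in $A$ of any finite-index solvable subgroup of $A$ must be trivial, forcing $A$ itself to be finite; then $\mathrm{Aut}(A)$ has some finite order $K$, conjugation by $\gamma^{Kn!}$ acts trivially on $A$, and $[x^{Kn!}, b] = 1$ serves as the mixed identity for any nontrivial $b \in A$. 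In each sub-case the candidate word is visibly alternating in $\Gamma * \langle x \rangle$ with nontrivial syllables, hence nontrivial. The only real subtlety I foresee is justifying the existence and characteristicity of $R(A)$ in the virtually solvable setting; the rest is routine verification.
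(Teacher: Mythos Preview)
Your proof is correct and follows essentially the same approach as the paper: Case~(2) is identical (up to the order of the commutator), and Case~(1) uses the same characteristic-abelian-subgroup idea, with your construction of $R(A)$ supplying the justification that the paper leaves implicit when it asserts that an infinite virtually solvable group has a nontrivial abelian characteristic subgroup. The only cosmetic difference is in the finite sub-case, where the paper uses $[x^{k},a]^{|A|}=1$ in place of your $[x^{Kn!},b]=1$.
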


\begin{proof}
$(1)$ Let $k=|\gC: N_\gC(A)|!$. If $A$ is finite then, for every $a\in A \smallsetminus 1$, $\gC$ satisfies the mixed identity $[x^k,a]^{|A|}=1$. Suppose now that $A$ is infinite. Being virtually solvable, $A$ has a nontrivial abelian characteristic subgroup $A_0$. For every $a\in A_0 \smallsetminus 1$, $\gC$ satisfies the mixed identity $[x^kax^{-k},a]=1$.

$(2)$ Let $k=|\gC:N_\gC(B)|!$. For every $b\in B\setminus\{1\}$ and $c\in C_\gC(N)\setminus\{1\}$, $\gC$ satisfies the mixed identity $[x^kbx^{-k},c]=1$.
\end{proof}

Recall that by the Tits alternative an amenable linear group is virtually solvable. Lemma \ref{lem:4.5} implies that a MIF linear group has a trivial amenable radical.

Let $\gC\subset\GL_d(\BC)$ be a MIF group, let $G$ be the Zariski closure of $\gC$, and let $G^\circ$ be the identity connected component. Note that $\gC$ intersects trivially the center of $G$. By Item (2) of Lemma \ref{lem:4.5}, $\gC$ intersects nontrivially at most one of the simple factors of $G^\circ$. If there is such a factor $G^\circ_0$ then $G^\circ_0$ as well as the product $\prod_{i\ne 0}G^\circ_i$ of all the other factors are normal in $G$ hence, by Item (2) of \ref{lem:4.5} again, $\gC$ projects faithfully modulo that product. 

In particular, we deduce:

\begin{prop}\label{prop:3.1}
Let $\gC$ be a MIF linear group. Then there is a faithful representation $\gC\hookrightarrow \GL_d(\BC)$ such that the identity component of the Zariski closure $\overline{\gC}^Z$ is center free and semisimple.
\end{prop}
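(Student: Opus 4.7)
The plan is to begin with an arbitrary faithful linear embedding $\Gamma \hookrightarrow \GL_d(\mathbb{C})$ and then modify it by projecting onto a quotient of the Zariski closure by a characteristic normal subgroup. Let $G = \overline{\Gamma}^Z$ with identity component $G^\circ$, let $R \subseteq G^\circ$ be the solvable radical, and let $R' \subseteq G^\circ$ be the preimage of the center of the semisimple group $G^\circ/R$ under the quotient map $G^\circ \to G^\circ/R$. Then $R'$ is solvable, being an extension of the finite group $Z(G^\circ/R)$ by the solvable group $R$; and since $R$ is characteristic in $G^\circ$ and $Z(G^\circ/R)$ is characteristic in $G^\circ/R$, the subgroup $R'$ is characteristic in $G^\circ$ and therefore normal in the whole of $G$.

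The key step is to apply Lemma \ref{lem:4.5}(1) to $A := \Gamma \cap R'$. Since $R'$ is normalized by $\Gamma$, so is $A$; and since $R'$ is solvable, so is $A$. If $A$ were nontrivial, Lemma \ref{lem:4.5}(1) would produce a nontrivial mixed identity on $\Gamma$, contradicting the MIF hypothesis. Hence $\Gamma \cap R' = 1$, so the composition $\Gamma \hookrightarrow G \twoheadrightarrow G/R'$ is injective. Choosing any faithful finite-dimensional representation $G/R' \hookrightarrow \GL_N(\mathbb{C})$ produces the desired embedding of $\Gamma$. Its Zariski closure is all of $G/R'$, because if $\pi : G \to G/R'$ denotes the projection then $\pi(\Gamma)$ is Zariski dense in $\pi(G) = G/R'$. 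The identity component of $G/R'$ is $G^\circ / R' \cong (G^\circ/R)/Z(G^\circ/R)$, which by construction is the adjoint form of a semisimple group, hence semisimple with trivial center.

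There is no substantive obstacle; the proof is essentially a packaging of Lemma \ref{lem:4.5}(1). The only point requiring care is the choice of $R'$: it must be simultaneously solvable (so that Lemma \ref{lem:4.5}(1) can be applied to $\Gamma \cap R'$) and large enough that $G^\circ/R'$ is both semisimple and center free. Taking $R'$ to be the preimage of the full center of the semisimple quotient $G^\circ/R$ achieves both properties in a single step, after which the desired faithful representation is produced by composing the original embedding with the quotient by $R'$.
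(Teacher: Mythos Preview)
Your proof is correct, and it takes a cleaner route than the paper's own argument. The paper proceeds in two stages: first (implicitly, via the remark that a MIF linear group has trivial amenable radical) it reduces to the case where $G^\circ$ is semisimple using Lemma \ref{lem:4.5}(1); then it invokes the almost-simple factor decomposition $G^\circ=\prod_i G_i^\circ$ and uses Lemma \ref{lem:4.5}(2) to show that $\Gamma$ meets at most one factor $G_0^\circ$ nontrivially, after which it projects modulo the product $\prod_{i\ne 0}G_i^\circ$ of the remaining factors. Your argument bypasses the factor decomposition and part (2) of the lemma entirely: by taking $R'$ to be the preimage in $G^\circ$ of the center of the semisimple quotient $G^\circ/R$, you obtain a single virtually solvable subgroup normal in all of $G$, and a single application of Lemma \ref{lem:4.5}(1) to $\Gamma\cap R'$ suffices. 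The paper's route extracts a bit of extra structural information along the way (the uniqueness of the simple factor meeting $\Gamma$), but this is not used elsewhere; your route reaches the stated conclusion more directly and also makes the center-free claim transparent, since $G^\circ/R'\cong (G^\circ/R)/Z(G^\circ/R)$ is visibly of adjoint type.
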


Using Lemma \ref{lem:4.5}, one can follow, word-by-word, the argument in \cite[\S 7.1]{primitive} to deduce that a MIF linear group satisfies the linear condition to primitivity \cite[Definition 1.8 \& Theorem 1.9]{primitive}. In particular, 
\begin{cor}
A finitely generated MIF linear group is primitive.
\end{cor}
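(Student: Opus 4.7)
The plan is to invoke the known sufficient criterion for primitivity of finitely generated linear groups from \cite{primitive}, namely the ``linear condition to primitivity'' of \cite[Definition 1.8 \& Theorem 1.9]{primitive}, and verify its hypotheses for an arbitrary MIF linear group $\Gamma$ by combining Proposition \ref{prop:3.1} with the two clauses of Lemma \ref{lem:4.5}. The strategy is essentially to transcribe the argument of \cite[\S 7.1]{primitive} and point out that each structural assumption made there is automatically satisfied under MIF.

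First, I would use Proposition \ref{prop:3.1} to replace $\Gamma$ by its image under a faithful representation $\Gamma \hookrightarrow \GL_d(\BC)$ for which the connected component $G^\circ$ of the Zariski closure is center-free and semisimple. This already delivers the structural ingredient required by the criterion: the ambient algebraic group is a semisimple adjoint group, $\Gamma$ is Zariski dense in its closure $G$, and $\Gamma$ contains no non-trivial element of the (trivial) center.

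The combinatorial input of the primitivity criterion amounts to a list of prohibitions on ``almost normal'' subgroups of $\Gamma$: roughly, no finite index subgroup of $\Gamma$ has a non-trivial virtually solvable normal subgroup, and no non-trivial almost-normal subgroup of $\Gamma$ has a non-trivial centralizer. These are exactly the two conditions ruled out by Lemma \ref{lem:4.5}(1) and (2): if $\Gamma$ had a non-trivial virtually solvable subgroup $A$ with $[\Gamma : N_\Gamma(A)]<\infty$, then Lemma \ref{lem:4.5}(1) would yield a non-trivial mixed identity, contradicting MIF; likewise Lemma \ref{lem:4.5}(2) rules out a non-trivial almost-normal $B$ with non-trivial centralizer. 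Once these are in place, one may copy the argument of \cite[\S 7.1]{primitive} verbatim, replacing each invocation of the hypothesis there with the corresponding consequence of Lemma \ref{lem:4.5}, to construct a maximal subgroup of infinite index in $\Gamma$ whose core is trivial, and hence obtain a faithful primitive action on $\Gamma/M$.

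The main obstacle is bookkeeping rather than mathematical depth: one needs to match precisely which ``almost-normal'' configuration each step of \cite[\S 7.1]{primitive} uses, and check that the corresponding conclusion is covered by one of the two clauses of Lemma \ref{lem:4.5} applied to the appropriate finite index subgroup. Since the construction in \cite{primitive} only ever needs to exclude virtually solvable normal subgroups of finite index subgroups and centralizers of almost-normal subgroups — and MIF prohibits both via Lemma \ref{lem:4.5} — the transcription goes through with no new ideas required, and the corollary follows.
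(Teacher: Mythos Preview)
Your proposal is correct and follows essentially the same route as the paper: the paper's justification is precisely that, using Lemma \ref{lem:4.5}, one may follow the argument of \cite[\S 7.1]{primitive} word-by-word to verify the linear condition to primitivity of \cite[Definition 1.8 \& Theorem 1.9]{primitive}. Your write-up simply unpacks this a bit more, additionally invoking Proposition \ref{prop:3.1} for the semisimple embedding, but the strategy and the key inputs are identical.
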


\subsection{Sharply MIF implies linearly MIF}

In this subsection, the groups are not assumed to be linear.

\begin{prop}\label{prop:Sharply->linear}
Let $\Gamma=\langle X\rangle,~|X|<\infty$ be a sharply MIF group. Then $\gC$ is linearly MIF.
\end{prop}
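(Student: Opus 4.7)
The plan is to fix $n$ and enumerate the nontrivial mixed words $w_1,\ldots,w_K$ of length $\le n$; since $K\le (2|X|+2)^n$, we have $\log K=O(n)$. I would assemble the $w_i$'s into a single nontrivial mixed word $W\in\Gamma*\langle x\rangle$ whose length $\|W\|$ is polynomial in $K$ and $n$, and then apply the sharp hypothesis to $W$ to produce a single element $\gamma$ of length $C\log\|W\|=O(n)$ with $W(\gamma)\ne 1$. Because a commutator $[A,B]$ is trivial whenever $A=1$ or $B=1$, if $W$ is built as a nested commutator combining all the $w_i$, then $W(\gamma)\ne 1$ forces $w_i(\gamma)\ne 1$ for every $i$, which is exactly linear MIF.

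The assembly proceeds on a binary tree of depth $\lceil\log_2 K\rceil$: at each internal node I replace its two children $u,v$ by $[u,\alpha v\alpha^{-1}]$ for a suitable $\alpha\in\Gamma$. Each commutator operation transforms lengths as $\ell\mapsto 4\ell+4L_0$, where $L_0$ is a uniform bound on $\|\alpha\|_X$. Iterating yields $\|W\|\le O\bigl(4^{\log_2 K}(n+L_0)\bigr)=O(K^2 n)$, so $\log\|W\|=O(\log K+\log n)=O(n)$, which is exactly the bound needed.

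The principal technical step, and the main obstacle, is a \emph{combining lemma}: for every pair of nontrivial elements $u,v\in\Gamma*\langle x\rangle$, there exists $\alpha\in\Gamma$ with $\|\alpha\|_X\le L_0$ (an absolute constant depending only on $X$) such that $[u,\alpha v\alpha^{-1}]\ne 1$ in the free product. This rests on the classical description of centralizers in a free product: a hyperbolic element (not conjugate into either factor) has cyclic centralizer generated by its primitive root. It follows that the set of $\beta\in\Gamma*\langle x\rangle$ for which $u$ and $\beta v\beta^{-1}$ share a common root is a union of at most two cosets of a cyclic subgroup, meeting $\Gamma$ in at most two points. Since MIF forces a trivial amenable radical (so $\Gamma$ is infinite and in fact non-elementary), any $L_0$ with $|B_{L_0}(\Gamma,X)|\ge 3$ suffices.

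The delicate part of the argument is handling the degenerate configurations of the combining lemma: cases in which $u$ or $v$ is conjugate into the factor $\Gamma$, where the centralizer in $\Gamma*\langle x\rangle$ need not be cyclic. Such configurations can appear at leaves (if some $w_i$ lies in $\Gamma$ as a constant word) and must be excluded at internal nodes by the inductive construction. In these cases one invokes the stronger consequences of MIF established in Lemma \ref{lem:4.5} (trivial center, no virtually solvable subgroup with finite-index normalizer) to locate a short $\alpha$ avoiding the enlarged bad set. Making this uniform in the tree---so that a single absolute constant $L_0$ controls every combining step---is the crux of the proof.
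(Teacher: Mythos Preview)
Your overall strategy---enumerate the nontrivial words of length $\le n$, assemble them into a single nontrivial word $W$ via a binary tree of commutators, then apply sharp MIF to $W$---is exactly the paper's, and your length bookkeeping ($\|W\|$ polynomial in $K$ and $n$, hence $\log\|W\|=O(n)$) matches as well.

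The difference, and the source of your unresolved ``crux,'' is the combining lemma. You conjugate only one of the two inputs, and only by an element $\alpha\in\Gamma$; you then analyze centralizers in the free product to bound the bad set of $\alpha$'s. This is clean in the hyperbolic case but leaves the elliptic case (an input conjugate into a factor) genuinely open: you invoke Lemma~\ref{lem:4.5} and the triviality of the amenable radical but do not actually produce a uniform $L_0$. Note too that if both $u,v\in\Gamma$ then $[u,\alpha v\alpha^{-1}]\in\Gamma$ for every $\alpha\in\Gamma$, so the degeneracy can propagate up the tree; your assertion that it is ``excluded at internal nodes by the inductive construction'' is not justified as stated.

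The paper bypasses all of this by (i) conjugating \emph{both} inputs and (ii) enlarging the conjugating set to $\{1,\gamma_1,\gamma_2,x,x^{-1}\}$, i.e., allowing $x^{\pm1}$ as conjugators. The combining lemma then becomes a one-line normal-form fact: one can always choose $\alpha_1$ so that the reduced form of $w_1^{\alpha_1}$ begins and ends with a nontrivial power of $x$, and $\alpha_2$ so that $w_2^{\alpha_2}$ begins and ends with a nontrivial element of $\Gamma$; two such elements never commute in $\Gamma*\langle x\rangle$. This needs nothing about $\Gamma$ beyond the existence of two distinct nontrivial elements---no centralizer analysis, no case split on elliptic versus hyperbolic, no appeal to Lemma~\ref{lem:4.5}. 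With this amended combining step (and $L_0=1$), your argument goes through verbatim.
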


\begin{lemma}
Let $\gC$ be a group with more than two elements.
Let $\gc_1,\gc_2$ be two distinct nontrivial elements in $\gC$. 
Then for any $w_1,w_2\in\gC*\langle x\rangle\smallsetminus\{1\}$ there are $\ga_1,\ga_2\in \{1,\gc_1,\gc_2,x^{\pm 1}\}$ such that $[w_1^{\ga_1},w_2^{\ga_2}]\ne 1$.
\end{lemma}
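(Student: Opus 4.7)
The plan is to argue by contradiction: assume that $[w_1^{\alpha_1},w_2^{\alpha_2}]=1$ for every $\alpha_1,\alpha_2\in\{1,\gc_1,\gc_2,x^{\pm 1}\}$. Using that $[A,B]=1$ iff $B\in C(A)$, this rephrases as the requirement that $\beta w_2\beta^{-1}\in C(w_1)$ for every $\beta$ in the set $S:=\{\alpha_1\alpha_2^{-1}:\alpha_1,\alpha_2\in\{1,\gc_1,\gc_2,x^{\pm 1}\}\}$, where the centralizer is taken in $\gC*\langle x\rangle$. The element $\beta=1$ already gives $[w_1,w_2]=1$; the additional elements $\gc_1,\gc_2,x^{\pm 1}\in S$ will be used below.

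Next I would split into three cases according to the structure of $w_1$, as dictated by the standard classification of centralizers in the free product $\gC*\langle x\rangle$ via Bass-Serre theory: either $w_1$ is conjugate into the factor $\gC$, or into the factor $\langle x\rangle$, or else $w_1$ is hyperbolic, in which case $C(w_1)=\langle v\rangle$ for the unique primitive root $v$ of $w_1$.

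I expect the two elliptic cases to dispatch quickly. For instance, if $w_1=hah^{-1}$ with $a\in\gC\smallsetminus\{1\}$, then commutativity of $w_1,w_2$ forces $w_2=ha'h^{-1}$ with $a'\in\gC\smallsetminus\{1\}$, and the condition $\beta w_2\beta^{-1}\in C(w_1)\subseteq h\gC h^{-1}$ translates into $(h^{-1}\beta h)a'(h^{-1}\beta h)^{-1}\in\gC$. Since in a free product the conjugation of a nontrivial element of a factor that stays inside the factor forces the conjugator to lie in the factor, this yields $h^{-1}\beta h\in\gC$. Taking $\beta=x\in S$ then puts a conjugate of $x$ in $\gC$, which is impossible because nontrivial elements of $\gC$ and of $\langle x\rangle$ are never conjugate in $\gC*\langle x\rangle$. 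The case of $w_1$ conjugate into $\langle x\rangle$ is handled symmetrically using $\beta=\gc_1\in S$.

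The hard part will be the hyperbolic case. Writing $w_2=v^m$ with $m\ne 0$, the relation $\beta v^m\beta^{-1}\in\langle v\rangle$ together with a translation-length argument on the Bass-Serre tree forces $\beta v\beta^{-1}\in\{v,v^{-1}\}$, i.e.\ $\beta\in N(\langle v\rangle)$, for every $\beta\in S$. Since $v$ is hyperbolic, $\langle v\rangle\cap\gC=\{1\}$, so $\gc_1,\gc_2\in S\cap N(\langle v\rangle)$ cannot lie in $\langle v\rangle$ and must therefore invert $v$ by conjugation. Consequently $\gc_1\gc_2^{-1}$ centralizes $v$, so $\gc_1\gc_2^{-1}\in C(v)=\langle v\rangle$; on the other hand $\gc_1\gc_2^{-1}\in\gC$, and hence $\gc_1\gc_2^{-1}\in\gC\cap\langle v\rangle=\{1\}$, forcing $\gc_1=\gc_2$ and contradicting the hypothesis. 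This is precisely the step where the distinctness of $\gc_1,\gc_2$ (equivalently, $|\gC|\ge 3$) enters the argument.
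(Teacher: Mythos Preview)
Your proof is correct, but it takes a genuinely different route from the paper's. The paper argues constructively with normal forms in the free product: one can always choose $\alpha_1\in\{1,x,x^{-1}\}$ so that the reduced expression of $w_1^{\alpha_1}$ begins and ends with a nontrivial power of $x$, and $\alpha_2\in\{1,\gamma_1,\gamma_2\}$ so that the reduced expression of $w_2^{\alpha_2}$ begins and ends with a nontrivial element of $\Gamma$ (the distinctness $\gamma_1\ne\gamma_2$ is used precisely when the first $\Gamma$-letter of $w_2$ happens to coincide with one of the $\gamma_i$). Two such elements cannot commute, since the reduced form of $w_1^{\alpha_1}w_2^{\alpha_2}w_1^{-\alpha_1}w_2^{-\alpha_2}$ has positive length. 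By contrast, you argue by contradiction through the Bass--Serre classification of centralizers in $\Gamma*\langle x\rangle$, splitting into the elliptic cases (where $\beta=x$ or $\beta=\gamma_1$ immediately gives a contradiction) and the hyperbolic case (where $\gamma_1,\gamma_2$ are both forced to invert the primitive root $v$, so $\gamma_1\gamma_2^{-1}\in\langle v\rangle\cap\Gamma=\{1\}$). The paper's argument is more elementary and explicitly identifies which $\alpha_i$ work; yours is more structural and makes transparent exactly why two distinct nontrivial elements of $\Gamma$ are required, via the $\mathbb{Z}/2$-quotient $N(\langle v\rangle)/\langle v\rangle$.
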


\begin{proof}
It is not hard to verify that one can chose $\ga_i$ so that the reduced form of $w_1^{\ga_1}$, in the free product $\gC*\langle x\rangle$,
 starts and ends with a non-trivial power of $x$, while the reduced form of $w_2^{\ga_2}$ starts and ends with non-trivial elements of $\gC$. Such elements do not commute in $\gC *\langle x\rangle$.
\end{proof}

\begin{proof}[Proof of Proposition \ref{prop:Sharply->linear}]
We may suppose that the generating set $X$ is symmetric.
Fix $n$ and let $w_1,\ldots ,w_k$ be the non-trivial words of length $n$ in the alphabet $X\cup\{x^{\pm 1}\}$.
Now join them by consecutive pairs (with the last one $w_k$ left alone if $k$ is odd) and take commutators, to obtain $k_2 = \left\lceil k/2\right\rceil$ new nontrivial words
$$
 [w_1^{\ga_1},w_2^{\ga_2}], [w_3^{\ga_3},w_4^{\ga_4}],\ldots
$$
where the $\ga_i$'s are chosen within the quintuple $\{1,x,x^{-1},\gs_1,\gs_2\}$ (for some fixed $\gs_1,\gs_2\in X$) to guarantee that the new words are still nontrivial in $\gC*\langle x\rangle$,
and denotes these new words by $w_{2,1},w_{2,2},\ldots,w_{2,k_2}$.

We may then repeat this process to obtain $k_3= \left\lceil k_2/2\right\rceil$ new words $w_{3,1},w_{3,2},\ldots,w_{3,k_3}$, with e.g.
$$
 w_{3,1}=[w_{2,1}^{\gb_1},w_{2,2}^{\gb_2}]=[[w_1^{\ga_1},w_2^{\ga_2}]^{\gb_1}, [w_3^{\ga_3},w_4^{\ga_4}]^{\gb_2}],
$$
where, again, $\gb_1,\gb_2\in \{1,x^{\pm 1},\gs_1,\gs_2\}$ are chosen so that $w_{3,1}$ is nontrivial as an element in $\gC*\langle x\rangle$.
We repeat this procedure $m=\lceil \log_2 k\rceil$ times to obtain a single word $W=w_{m,1}$ which involves all $w_i$'s. 
The length of $W$ satisfies: 
$$
 |W|\le n\cdot 2m\cdot k^2.
$$
Indeed, the initial number of terms, $k$, is the size of the $n$'th ball $(X\cup\{x^{\pm 1}\})^n$. Each term is initially of length at most $n$, but along the process may extend by twice the number of steps, $2m$, as a result of the conjugations by $\ga_i,\gb_j,\ldots$. As each original $w_i$ participates at most $k$ times in $W$ (for example, $w_1$ participates twice in $w_{2,1}$ and four times in $w_{3,1}$, etc.) the estimate follows. As $m=\lceil\log_2 k\rceil$ and $k\le (|X|+2)^{n}$, we may increase that upper bound and use $(|X|+2)^{3n}$ instead.

Suppose now that $\gC$ is sharply MIF and let $c=c(X)$ be the corresponding constant. Then there is $\gc\in\gC$ of length $|\gc|\le c\log |W| \le c\log (|X|+2)^{3n} \le 3c\log(|X|+2)n$ such that 
$W(\gc)\ne 1$. Note that by the construction of $W$ as a commutator of commutators etc, $W(\gc)\ne 1\Rightarrow w_i(\gc)\ne 1$ for all $i=1,\ldots k$.
\end{proof}

\begin{cor}\label{Cor:simultaneously}
Let $\Gamma=\langle X\rangle,~|X|<\infty$ be an MIF group. Then for any finite set of non-trivial elements $w_1,\ldots,w_k\in\gC*\langle x\rangle$, there is $\gc\in\gC$ with $w_i(\gc)\ne 1,~\forall i=1,\ldots k$.
\end{cor}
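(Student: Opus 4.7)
The plan is to adapt, without worrying about any length bounds, the same iterated-commutator construction that appears in the proof of Proposition \ref{prop:Sharply->linear}. Given the finite collection $w_1,\ldots,w_k \in \Gamma * \langle x\rangle \smallsetminus \{1\}$, I would build from them a single nontrivial mixed word $W \in \Gamma * \langle x \rangle$ whose nontriviality on a substitution $\gamma$ forces the nontriviality of every $w_i(\gamma)$, and then apply the MIF hypothesis directly to $W$.

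More concretely, first observe that a MIF group is infinite and nonabelian (since having a nontrivial center would produce a mixed identity $[\alpha x, \alpha] = 1$, and being finite or abelian forces other identities), so $\Gamma$ contains at least two distinct nontrivial elements $\sigma_1, \sigma_2$ and the lemma preceding Proposition \ref{prop:Sharply->linear} applies. Pair up the $w_i$'s and choose exponents $\alpha_i \in \{1, x, x^{-1}, \sigma_1, \sigma_2\}$ so that each commutator $[w_{2i-1}^{\alpha_{2i-1}}, w_{2i}^{\alpha_{2i}}]$ is nontrivial in $\Gamma * \langle x \rangle$, producing $\lceil k/2 \rceil$ new nontrivial mixed words. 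Iterate this pairing-and-commutator procedure $\lceil \log_2 k \rceil$ times to amalgamate everything into one nontrivial word $W \in \Gamma * \langle x \rangle \smallsetminus \{1\}$. (Exactly as in the proof of Proposition \ref{prop:Sharply->linear}, but now we discard the length bookkeeping entirely.)

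Since $\Gamma$ is MIF, there exists $\gamma \in \Gamma$ such that $W(\gamma) \neq 1$. I then observe, by descending induction on the level of the commutator tree, that $W(\gamma) \neq 1$ forces $w_i(\gamma) \neq 1$ for every $i$: at every internal node $[A^{\beta_1}, B^{\beta_2}]$, if the value in $\Gamma$ is nontrivial then in particular $A(\gamma)$ and $B(\gamma)$ are both nontrivial (since any commutator involving a trivial factor is trivial); applied to the leaves, this gives $w_i(\gamma) \neq 1$ for all $i$.

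There is essentially no real obstacle here — the only thing to verify is the preliminary remark that a MIF group has at least two distinct nontrivial elements so that the commutator-construction lemma applies, and this is immediate from the Tits alternative discussion combined with Lemma \ref{lem:4.5} which shows that a MIF group cannot be abelian and (in the linear case, but really in general by the same abstract reasoning with centralizers) cannot be very small. Everything else is a direct reuse of the purely combinatorial part of the proof of Proposition \ref{prop:Sharply->linear}.
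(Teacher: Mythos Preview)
Your proposal is correct and follows exactly the approach implicit in the paper: the corollary is placed immediately after the proof of Proposition \ref{prop:Sharply->linear} because it is meant to follow from the very same iterated-commutator construction, discarding the length estimates and invoking only the MIF hypothesis on the resulting single word $W$. Your brief justification that a MIF group has more than two elements (it is infinite, since otherwise $x^{|\Gamma|}=1$ is a nontrivial identity) is the only additional remark needed, and the rest is a verbatim reuse of the combinatorial argument.
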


\subsection{Proof of Theorem \ref{thm:main}}
By Proposition \ref{prop:Sharply->linear}, it is enough to prove the first claim of the theorem. 

Denote $G:=\overline{\Gamma}^Z$. By Proposition \ref{prop:3.1} and after possibly changing the linear embedding, we can assume that the identity component $G^\circ$ of $G$ is semisimple. Let $\pi : \widetilde{G} \rightarrow G^\circ$ be a universal cover of $G^\circ$. Denote $\Gamma^\circ:= \Gamma \cap G^\circ$ and $\widetilde{\Gamma}:= \pi ^{-1} \left( \Gamma ^ \circ \right)$.

Choose finite symmetric generating sets $X$ and $X^\circ$ for $\Gamma$ and $\Gamma ^\circ$, respectively. The set $\widetilde{X}:=\pi ^{-1} \left( X^\circ \right)$ is a finite symmetric generating set for $\widetilde{\Gamma}$. Let $(\delta_n)$ be the random walk on $\widetilde{\Gamma}$ whose steps are iid uniformly distributed in $\widetilde{X}$. We will prove that there are constant $C_0,C_0'$ such that for every nontrivial mixed word $w$, 
\begin{equation} \label{eq:pr.0.5}
\Pr \left( w \left( \pi \left( \delta_{C_0\log \|w\|_X+C_0'} \right) \right) =1\right) < \frac12,
\end{equation}
so there is an element $g\in \widetilde{\Gamma}$ with $\|g\|_{\widetilde{X}}<C_0\log\|w\|_X$ for which $w(\pi(g))\neq 1$. The theorem follows since $\| \pi(g) \|_X \leq \max \left\{ \|h\|_X \mid h\in X^\circ \right\} \cdot \|g\|_{\widetilde{X}}$ for every $g\in \widetilde{\Gamma}$.\\

Let $R$ be a finitely generated ring containing all the entries of elements of $X$ and $\widetilde{X}$. Since $\langle X \rangle $ is Zariski dense in $G$ and $\langle \widetilde{X} \rangle$ is Zariski dense in $\widetilde{G}$, $\pi$ is defined over $R$. By Theorem \ref{thm:prob.SSA} applied for $R$ and $\widetilde{X}$, there is an element $r\in R \smallsetminus 0$, a group scheme $\mathbb{G}$ defined over $R$, a Galois extension $F/ \mathbb{Q}$, and constants $p_0,\epsilon$ such that if $p>p_0$ is a prime number that splits in $F$ and $\varphi$ is a uniformly distributed element in the set $\Hom(R[r ^{-1}],\mathbb{F}_p)$, then
\begin{equation} \label{eq:Pr.expander}
\Pr \left( \Cay \left( \varphi \left( \widetilde{\Gamma} \right) , \varphi \left( \widetilde{X} \right) \right) \text{ is not an $\epsilon$-expander} \right) < \frac{1}{\sqrt{p}}. 
\end{equation}
In addition, for every $\varphi \in \Hom(R[r ^{-1}],\mathbb{F}_p)$, the algebraic group $\mathbb{G} ^ \varphi$ is connected and has dimension $\dim G$. By Lemma \ref{lem:size.G},
\begin{equation} \label{eq:size.GG}
(p-1)^{\dim G} \leq | \mathbb{G} ^ \varphi(\mathbb{F}_p) | \leq (p+1)^{\dim G}.
\end{equation}

Let $S=R[\mathbb{G}]$ be the coordinate ring of $\mathbb{G}$ and let $F_S$ be the algebraic closure of $\mathbb{Q}$ in $\Frac(S)$. Localizing $R$ and $S$ by some natural number $N$ (and changing $p_0$ to $\max \left\{ p_0,N \right\}$), we can further assume that $S$ is a finite extension of a ring of the form $\mathbb{Z}[1/N][x_1,\ldots,x_t]$. Let $\h_S$ be the height function obtained by applying Theorem \ref{thm:ht.exists} to $N,t,R,F \cdot F_S$, and let $C_S$ be a constant such that both Theorem \ref{thm:ht.exists} and Lemma \ref{lem:random.specialization.nonzero} hold with $C=C_S$.

Every mixed word $w(x)\in \Gamma * \langle x \rangle$ is the restriction to $\Gamma$ of a polynomial map $G \rightarrow \GL_d$ whose coefficients are in $R$ and the degree of each entry is bounded by the number of times the letter $x$ or $x ^{-1}$ appears in $w$ and thus by $\|w\|_{X \cup \left\{ x , x ^{-1} \right\}}$. Similarly, there is a constant $C_1$ independent on $w$ such that $w \circ \pi$ extends to a polynomial map $\mathbb{G} \rightarrow \GL_d$ and the degree of each entry is $\leq C_1 \|w(x)\|_{X \cup \left\{ x , x ^{-1} \right\} }$. Alternatively, the entries of the polynomial map $w\circ \pi$ are elements of $S$. By induction on length and using \eqref{item:ht.+} and \eqref{item:ht.*} of Theorem \ref{thm:ht.exists}, there is a constant $C_2$ independent of $w$ such that $\h_S \left( (w \circ \pi(x))_{i,j} \right)  \leq C_2 \|w\|_{X \cup \left\{ x , x ^{-1} \right\} }$, for every $1 \leq i,j \leq d$.\\

Let $D=\max \left\{ 10C_SC_2 , 4C_1\deg G\right\}$. Assume that $w$ is a nontrivial word. Since $\Gamma$ does not satisfy the identity $w \left( x^{[\Gamma:\Gamma^\circ]} \right) =1$, the restriction of $w$ to $\Gamma ^\circ$ is nontrivial, so there are $1 \leq i,j \leq d$ for which the element $s:=(w \circ \pi(x))_{i,j}$ is nonzero. By Theorem \ref{thm:ht.exists} (\ref{item:ht.mod.p.2}) there is a prime $D \|w\| <p<D C_S \|w\| $ such that $p$ splits in $F$ and the image of $s$ in $S \otimes \mathbb{F}_p$ is not a zero divisor. By Lemma \ref{lem:random.specialization.nonzero}, if $\psi$ is a uniformly chosen element of $\Hom(S,\mathbb{F}_p)$ then
\begin{equation} \label{eq:psi.s.0}
\Pr \left( \psi(s)=0 \right) < \frac{C_{S} C_2\| w\|}{p}.
\end{equation}
For every $\varphi \in \Hom(R,\mathbb{F}_p)$, the homomorphisms $\psi :S \rightarrow \mathbb{F}_p$ that extend $\varphi$ are in bijection with $\mathbb{G} ^ \varphi(\mathbb{F}_p)$. Moreover, if the polynomial $s^ \varphi$ vanishes on $\mathbb{G} ^ \varphi$ then all of those extensions satisfy $\psi(s)=0$. Thus, \eqref{eq:size.GG} and \eqref{eq:psi.s.0} imply that
\begin{equation} \label{eq:phi.s.vanish}
\Pr \left( s^ \varphi \text{ vanishes on $\mathbb{G} ^ \varphi$} \right) < \left( \frac{p+1}{p-1} \right)^d \frac{C_{S}C_2 \|w\|}{p} \leq \frac{eC_{S}C_2\|w\|}{p} <\frac13.
\end{equation}

By \eqref{eq:Pr.expander} and \eqref{eq:phi.s.vanish}, there is a homomorphism $\varphi:R \rightarrow \mathbb{F}_p$ such that $s^ \varphi$ is a polynomial of degree $\leq C_1\|w\|$ that does not vanish on $\mathbb{G} ^ \varphi$ and such that $\Cay ( \varphi ( \widetilde{\Gamma} ) , \varphi ( \widetilde{X} ) )$ is an $\epsilon$-expander. By Lemma \ref{lem:DKL}, 
\[
\frac{|\left\{ g\in \mathbb{G} ^ \varphi (\mathbb{F}_p) \mid s^ \varphi (g)=0 \right\}|}{|\mathbb{G} ^ \varphi(\mathbb{F}_p)|} \leq \frac{C_1 \deg \mathbb{G} ^ \varphi \|w\|}{p}.
\]
Let $(\delta_n)$ be the random walk on $\widetilde{\Gamma}$ whose steps are iid uniformly distributed in $\widetilde{X}$. Since $\varphi(\delta_n)$ is the simple random walk on $\Cay( \varphi ( \widetilde{\Gamma} ) , \varphi ( \widetilde{X} ) )$, Lemma \ref{lem:RW.general} implies that if $n \geq -\frac{\dim G \log(p+1)}{\log(1- \epsilon)} \geq -\frac{\log | \mathbb{G} ^ \varphi(\mathbb{F}_p)|}{\log(1- \epsilon)}$ then
\[
\Pr(w(\pi(\delta_n))=1) \leq \Pr(s(\delta_n)=0) \leq \Pr \left( s^ \varphi (\varphi(\delta_n))=0\right) \leq 2\frac{|\left\{ g\in \mathbb{G} ^ \varphi (\mathbb{F}_p) \mid s^ \varphi (g)=0 \right\}|}{|\mathbb{G} ^ \varphi(\mathbb{F}_p)|}
\]
\[
\leq \frac{2C_1\deg \mathbb{G} ^ \varphi \|w\|}{p} \leq \frac12,
\]
and \eqref{eq:pr.0.5} follows since
\[
-\frac{\dim G \log(p+1)}{\log(1- \epsilon)} \leq \frac{\dim G}{-\log(1- \epsilon)}\log\|w\|+\frac{\dim G \log(DC_S)}{-\log(1- \epsilon)}.
\]


\begin{thebibliography}{WWW}

%



\bibitem{thom}
Bradford, Henry; Schneider, Jakob; Thom, Andreas,
On the length of nonsolutions to equations with constants in some linear groups.
Bull. Lond. Math. Soc. 56 (202)


\bibitem{BeBr} O. Becker, E. Breuillard, Uniform spectral gaps, non-abelian littlewood--Offerdo and non-concentration for random walks, preprint.

\bibitem{BS} Henry Bradford, Alessandro Sisto, Non-solutions to mixed equations in acylindrically hyperbolic groups coming from random walks, arXiv:2504.15456.


\bibitem{Strict} Bruce Blackadar. Comparison theory for simple C*-algebras. In David E. Evans and Masamichi Takesaki, editors,
Operator Algebras and Applications, London Math. Soc. Lecture Note Ser., pages 21--54. Cambridge University
Press, 1989.

\bibitem{Br} E. Breuillard, Approximate subgroups and supper strong approximation, London Math. Soc. Lecture Note Ser., 422
Cambridge University Press, Cambridge, 2015, 1--50.

\bibitem{dense} E. Breuillard, T. Gelander, On dense free subgroups of Lie groups, J. of Algebra 261(2)
(2003), 448--467.

\bibitem{uti} Breuillard, E., Gelander, T. Uniform independence in linear groups. Invent. math. 173, 225--263 (2008).

\bibitem{BGT11} E. Breuillard, B. Green, and T. Tao. Approximate subgroups of linear groups. Geom.
Funct. Anal., 21(4):774--819, 2011.


\bibitem{DKL14} Dvir, Z., Kollar, J., Lovett, S. Variety Evasive Sets. comput. complex. 23, 509--529 (2014).

\bibitem{EMO} A. Eskin, M. Mozes, H. Oh, On uniform exponential growth for linear groups in
characteristic zero, Invent. Math. 160, no. 1, pp. 1--30, (2005).

\bibitem{EGA_IV_3} Elements of Algebraic Geometry IV$_3$.

\bibitem{primitive}
Tsachik Gelander, Yair Glasner, Countable primitive groups, Geom. Funct. Anal. 17 (5) (2008) 1479--1523.

\bibitem{GV} A. S. Golsefidy and P. P. Varju. Expansion in perfect groups. Geom. Funct. Anal.,
22(6):1832--1891, 2012.

\bibitem{IwKo04} H. Iwaniec and E. Kowalski, Analytic number theory. Amer. Math. Soc. Colloq. Publ., 53, Providence, RI, 2004. xii+615 pp.

\bibitem{Jordan} C. Jordan, Memoire sur les \'equations diffir\'entielles liniaires \'a int\'egrale alg\'ebrique, J. Math. 84
(1878), 89--215.


\bibitem{Strong(T)}
Lafforgue, Vincent,
Lafforgue, Vincent(F-PARIS6-IMJ)
Propri\'et\'e (T) renforc\'ee banachique et transformation de Fourier rapide.
J. Topol. Anal. 1 (2009), no. 3, 191--206.

\bibitem{Nor87} Nori, M. V. On subgroups of $\GL_n(\mathbb{F}_p)$, Invent. Math. 88 (1987), no. 2, 257--275.

\bibitem{Selfless} Leonel Robert. Selfless C*-algebras. arXiv preprint arXiv:2309.14188, 2023.

\bibitem{Sri1} Tattwamasi Amrutam, David Gao, Srivatsav Kunnawalkam Elayavalli, Gregory Patchell,
 Strict comparison in reduced group $C^*$-algebras,  arXiv:2412.06031, 2025.
 
 \bibitem{Sri2} Srivatsav Kunnawalkam Elayavalli, Christopher Schafhauser,
 Negative resolution to the $C^*$-algebraic Tarski problem, arXiv:2503.10505, 2025


\bibitem{Tomanov1}
Tomanov, G. M.
Generalized group identities in linear groups,
Dokl. Akad. Nauk BSSR 26 (1982), no. 1, 9--12, 92.


\bibitem{Tomanov2}
Tomanov, G. M.
Generalized group identities in linear groups.(Russian)
Mat. Sb. (N.S.) 123(165) (1984), no. 1, 35--49.

\bibitem{LaSe74} V. Landazuri and G. M. Seitz. On the minimal degrees of projective representations of the finite Chevalley groups. J. Algebra, 32:418--443, 1974.

\bibitem{PySa16} Laszlo Pyber and Endre Szabo; Growth in finite simple groups of Lie type J. Amer. Math. Soc. 29 (2016), 95--146

\bibitem{BoGa08} J. Bourgain and A. Gamburd. Uniform expansion bounds for Cayley graphs of
SL2(Fp). Ann. of Math. (2), 167(2):625--642, 2008.

\bibitem{Con} B. Conrad, Reductive group schemes, Autour des schemas en groupes. Vol. I, 93--444, Panor. Syntheses, 42/43, Soc. Math. France, Paris, 2014.

\bibitem{Itamar}
Itamar Vigdorovich, Structural properties of reduced $C^*$-algebras associated with higher-rank lattices, arXiv:2503.12737, 2025.

\bibitem{Wei84} Weisfeiler, B. Strong approximation for Zariski-dense subgroups of semisimple algebraic groups.
Ann. of Math. (2) 120 (1984), no. 2, 271--315.

\end{thebibliography}
\end{document}